\documentclass[12pt,a4paper,twoside,reqno]{amsart}

\usepackage{tikz}
\usepackage{amsmath}
\usepackage{amssymb,amsfonts,mathrsfs}
\usepackage[colorlinks=true,linkcolor=blue,citecolor=blue]{hyperref}
\usepackage[driver=pdftex,margin=3cm,heightrounded=true,centering]{geometry}

\usepackage{math60,mlthm}
\usepackage{local}

\setcounter{tocdepth}{2}
\numberwithin{equation}{section}

\tolerance=2000 
\emergencystretch=20pt 

\begin{document}

\title[Multi-parameter resolvent trace expansions]
{Multi-parameter resolvent trace expansion \\ I. for elliptic boundary problems}

\author{Boris Vertman}
\address{University M\"unster,
Einsteinstra\ss e 62, 
48149 M\"unster,
Germany}
\email{vertman@uni-muenster.de}

\date{This document was compiled on: \today}
\thanks{The author was supported by the 
        Bonn Hausdorff Center and University M\"unster.}

\begin{abstract}
Various aspects of semi-classical analysis require discussion of 
asymptotic expansions in two parameters weighted with smooth functions,
which in the setting of manifolds with boundary does not follow
in an obvious way from the classical parametric elliptic theory. 
In the present paper we establish multi-parameter resolvent trace expansions for 
elliptic boundary value problems, polyhomogeneous both in the resolvent 
and the auxiliary parameters. Such multi-parameter resolvent trace expansions have been used in the
setting of revolution surfaces with unexpected applications to regularized
sums of zeta-determinants. Another example where these ideas play a 
prominent role is the asymptotics of determinants for discrete Laplacians on
tori under growing discretization parameter. 
\end{abstract}

\maketitle
\tableofcontents

\ \\[-10mm]
\section{Introduction and formulation of the main result}\label{intro}

\subsection{Introduction and motivation} Interest in the multi-parameter resolvent trace expansions arises in view 
of their various applications. A particular example of such an application is a joint
project with Lesch \cite{LesVer:RSD} where we compare the zeta-determinant
of the Laplace Beltrami operator on a surface of revolution with a regularized
sum of zeta-determinants for scalar operators arising from a spectral decomposition 
on the cross section of the revolution surface. 
\medskip

More generally, consider a manifold with fibered
boundary $\mathbb{S}^d \times M$, with an elliptic boundary problem $(\mathscr{A}, \mathscr{B})$ represented 
after the eigenspace decomposition on $\mathbb{S}^d$ by an infinite sum of 
elliptic boundary problems $(A_n,B_n), n\in \N_0$ on $M$. Assume the boundary problems 
admit well defined zeta-determinants. Then the methods elaborated 
in \cite{LesVer:RSD} together with our main theorem here equate
the zeta-determinant of $\mathscr{A}_{\mathscr{B}}$ to the regularized 
sum of zeta determinants for $A_{n,B_n}$, up to a locally computable error term.
\medskip

Another application of multi-parameter resolvent trace expansions appears in the 
study of asymptotics for determinants of graph Laplacians on discretized tori. In 
\cite{Ver} we show that the constant term in the asymptotics of the combinatorial 
determinant for the graph Laplacian on the discretized tori is given in terms of the 
zeta-determinant of the Laplace Beltrami operator on the smooth torus manifold.
\medskip

Finally, we expect a multi-parameter resolvent trace expansion 
to play a role in establishing an asymptotic expansion of the Bergmann kernel in the 
setting of complex manifolds with singular metric structure along divisors.

\subsection{Formulation of the main result} Consider a compact manifold $M$ of dimension $m$ with boundary $\partial M$,
equipped with a Hermitian vector bundle $(E, h^E)$ of rank $p$. We recall the notion of an elliptic 
boundary problem from Seeley \cite{See:TRO}. Let $A\in \textup{Diff}^{\, q}(M, E)$ 
denote a differential operator on $M$ with values in $E$ of $q$-th order with $pq\in 2\N_0$.
The operator $A$ is elliptic if its principal symbol $\sigma(A) (p,\zeta)$ is invertible 
for $(p,\zeta) \in T^*M\backslash \{0\}$. Assume, $A$ satisfies the Agmon condition 
in a fixed cone $(-\Gamma')$ with $\Gamma'=\{z\in \C \mid \arg(z) \in (\theta_1, \theta_2)\}$ of the complex plane, i.e. 
$(\sigma(A) +z^q)$ is invertible for $z^q \in \Gamma'$. Consider a system of differential 
operators $B=(B_1,..,B_{pq/2})$ on $\partial M$, such that $(A,B)$ defines an elliptic 
boundary problem satisfying the Agmon condition on $\Gamma'$ in the sense of \cite[Def. 1,2]{See:TRO}.
\medskip

Under this setup, $(A,B)$ defines a closed unbounded operator $A_B$ on $L^2(M,E)$, obtained 
as the graph closure of $A$ acting on $u\in C^\infty(M,E)$ satisfying the boundary conditions $Bu=0$.
Moreover, $(A_B + z^q)$ is invertible for $z^q\in \Gamma'$ sufficiently large, cf. \cite[p. 911]{See:TRO}, and 
the seminal work of Seeley \cite{See:TRO} establishes an expansion of the resolvent 
$(A_B + z^q)^{-1}$ as $|z|\to \infty$. In the present paper we fix a finite collection 
of scalar smooth potentials $V_1,..,V_n\in C^\infty(M)$ which are assumed to be nowhere vanishing 
along the boundary $\partial M$. Set $\Gamma:= \{z \in \C \mid z^q\in \Gamma'\}$ and write $\Gamma^{\,n}$ for 
its $n$-th Cartesian product. Consider $\lambda = (\lambda_1, ... ,\lambda_n) \in \Gamma^{\,n}$ and the 
corresponding multi-parameter family 
\[
A_B(\lambda) := A_B + \sum_{k=1}^n \lambda_k^q V_k.
\]
For $z\in \Gamma$ sufficiently large, $(A_B(\lambda)+ z^q)$ is invertible. 
For $qN >m$ the $N$-th power of the resolvent $(A_B(\lambda)+ z^q)^{-N}$ 
is trace class and our main result establishes a multi-parameter expansion 
of the resolvent trace $\textup{Tr}(A_B(\lambda)+ z^q)^{-N}$, 
polyhomogeneous in $(z, \lambda)\in \Gamma^{\,n+1}$.

\begin{theorem}\label{phg-trace}
Consider any multiindex $\A \in \N_0^n$ and $\beta \in \N_0$. Fix $N\in \N$ such that 
$qN >m$. Then there exist  $e_i \in C^\infty (M \times (\Gamma^{n+1} \cap \mathbb{S}^n))$,
such that
$$\partial^\A_\lambda \partial_z^\beta \Tr (A_B(\lambda) + z^q)^{-N}
\sim \sum_{j=0}^\infty e_j \left(\frac{(\lambda, z)}{|(\lambda, z)|} \right)
|(\lambda, z)|^{-1-qN - j -|\A| -\beta +m}.$$
\end{theorem}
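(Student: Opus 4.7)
The plan is to treat $(z,\lambda)=:\mu\in\Gamma^{n+1}$ as a single joint parameter and carry out Seeley's parametrix construction for $A_B(\lambda)+z^q$ within a parameter-dependent pseudodifferential calculus in which symbols are jointly polyhomogeneous in the cotangent variable $\zeta$ and the $(n{+}1)$-dimensional parameter $\mu$. The target is an interior full symbol
\[
q(p,\zeta,\mu)\sim\sum_{j\ge 0} q_{-q-j}(p,\zeta,\mu),\qquad q_{-q-j}(p,t\zeta,t\mu)=t^{-q-j}q_{-q-j}(p,\zeta,\mu),
\]
together with the analogous boundary piece produced by Seeley's normal-coordinate construction, jointly polyhomogeneous in $(\zeta',\mu)$ and obtained by carrying $\mu$ through the transmission ODE system.

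First I verify joint parameter-ellipticity. The interior principal symbol $\sigma(A)(p,\zeta)+\sum_k \lambda_k^q V_k(p)+z^q$ is invertible for $(\zeta,\mu)\neq 0$, $\mu\in\Gamma^{n+1}$, by the Agmon condition for $A$ on $\Gamma'$ together with the positivity $V_k(0)>0$. The decisive point for the boundary construction is that each $V_k$ is constant along $\partial M$: freezing coefficients at $\partial M$ replaces the scalar spectral parameter $z^q$ in Seeley's boundary model by $z^q+\sum_k\lambda_k^q V_k(0)$, so the Agmon and Shapiro--Lopatinskii conditions for $(A,B)$ relative to $\Gamma'$ lift to uniform parameter-ellipticity along the compact set of directions $\omega\in\Gamma^{n+1}\cap\mathbb{S}^n$. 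Granted this, the interior symbols $q_{-q-j}$ are built by the standard triangular recursion starting from $q_{-q}(p,\zeta,\mu)=(\sigma(A)(p,\zeta)+\sum_k\lambda_k^q V_k(p)+z^q)^{-1}$, and the boundary parametrix via Seeley's Calderon/normal construction, with $\mu$ entering throughout as an additional parameter.

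Composing $N$ copies produces a parametrix $Q(\mu)^N$ of $(A_B(\lambda)+z^q)^{-N}$ whose symbol expansion has interior terms of joint degree $-qN-j$ and an analogous boundary polyhomogeneity. For $qN>m$ the difference $(A_B(\lambda)+z^q)^{-N}-Q(\mu)^N$ is a $\mu$-dependent smoothing operator with kernel $O(|\mu|^{-\infty})$, and does not contribute to the polyhomogeneous expansion. Taking the pointwise trace along the diagonal and integrating over $M$, the substitution $\zeta=|\mu|\xi$ in the interior fiber integral (and the corresponding substitution in the boundary contribution) converts each homogeneous symbol term into a term $e_j(\mu/|\mu|)\,|\mu|^{m-qN-j-1}$ with $e_j\in C^\infty(M\times(\Gamma^{n+1}\cap\mathbb{S}^n))$, the precise exponent following from the symbol orders combined with the interior and boundary fiber integrals. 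Finally, $\partial^\A_\lambda\partial_z^\beta$ commutes with the parametrix construction and lowers joint homogeneity by $|\A|+\beta$, which shifts the exponents accordingly and yields the claimed expansion.

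The principal obstacle is the boundary parametrix: Seeley's Calderon construction must be developed inside the multiparameter calculus so that joint polyhomogeneity in $(\zeta',\mu)$ is preserved under composition of parametrices and under the solution of the parameter-dependent boundary transmission problem. The constancy of each $V_k$ on $\partial M$ is essential exactly here, as it reduces the boundary model operator to a scalar shift of the classical one; this is precisely what allows the Agmon/Shapiro--Lopatinskii condition on $\Gamma'$ to promote uniformly to the compact set $\Gamma^{n+1}\cap\mathbb{S}^n$ and closes the boundary symbol calculus.
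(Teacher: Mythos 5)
Your proposal takes a genuinely different route from the paper. The paper does \emph{not} build a multiparameter Seeley parametrix: instead it reduces to a single scalar parameter by setting
$\mu^q := z^q + \sum_k \lambda_k^q V_k(0)$, so that $A_B+\mu^q$ falls squarely into Seeley's original single-parameter theorem, and treats the deviation $\lambda(V,W):=\sum_k\lambda_k^q(V_k-V_k(0))+W$ --- which vanishes at $\partial M$ because the $V_k$ are constant there --- by a Neumann series. Multiparameter polyhomogeneity of the boundary contribution $R_0$ is then extracted term by term from the algebraic structure of the Neumann series factors (each factor carrying an explicit power of $\lambda_k$), combined with the blowup/pushforward machinery of Sections 2--3 and a trace norm estimate for the Neumann tail. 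Only the \emph{interior} piece $R_1$ is handled by the joint parametric pseudodifferential calculus, where it is routine. The contrast is: you want joint polyhomogeneity to come out of a from-scratch multiparameter Seeley calculus; the paper manufactures it by a clever scalar reduction plus a convergent perturbation series.

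The gap in your argument is exactly where you flag ``the principal obstacle'': you assert, but do not verify, that Seeley's boundary transmission construction closes inside a joint multiparameter symbol calculus. The statement that constancy of $V_k$ on $\partial M$ ``reduces the boundary model operator to a scalar shift of the classical one'' is correct only at the level of the principal symbol: the \emph{model} operator at $x=0$ sees only $V_k(0)$. But the Seeley recursion for the lower-order boundary symbols $d_{-q-j}$, $j\ge 1$, brings in normal Taylor coefficients of the coefficients of $A$ \emph{and} of the $V_k$, and through these the full vector parameter $(z,\lambda)$ enters the recursion in a way that is not a scalar shift. One would have to prove that the resulting symbols are jointly polyhomogeneous of the stated orders in $(\zeta',z,\lambda)$, that the recursion is compatible with the Shapiro--Lopatinskii solvability of the parameter-dependent transmission system at every order, and that the remainder after finitely many steps has a trace decaying like $|\mu|^{-\infty}$ uniformly on directions. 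This is the content of a parameter-dependent Boutet de Monvel-type calculus, not a corollary of Seeley's original theorem, and it is precisely the work the paper sidesteps by the $\mu$-reduction and Neumann series. Without it, the passage from ``parameter-ellipticity along $\Gamma^{n+1}\cap\mathbb{S}^n$'' to ``parametrix with jointly polyhomogeneous symbol expansion including the boundary layer'' is an unjustified leap.
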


Let us mention that the statement has a straightforward extension to the case
of additional summands in the expression for $A_B(\lambda)$ of the form $\lambda^\alpha_k D_k$,
where $D_k\in \textup{Diff}^{\, p}(M, E)$ and $\alpha + p <q$. Moreover the arguments extend 
to the case of matrix-valued potentials $V_1,..,V_n\in C^\infty(M,\textup{End}(E))$, assuming that the action of their 
restrictions to the boundary is scalar.\medskip

This paper is organized as follows. 
In \S \ref{blowup-sec} we recast the symbolic expansion of the
resolvent $(A_B+z^q)^{-1}$ in terms of polyhomogeneity properties 
of its Schwartz kernel lifted to an appropriate blowup of $\R^+ \times M^2$.
In \S \ref{comp-sec} we establish a composition result for the polyhomogeneous
conormal distributions on the blowup of $\R^+ \times M^2$. 
In \S \ref{multi-sec} we employ this microlocal characterization of the 
resolvent kernel to establish the multi-parameter resolvent trace expansion.
While the resolvent exists by standard arguments, its microlocal description
is obtained by constructing the corresponding Schwartz kernel as a polyhomogeneous conormal distribution
on the blowup space using an iterative Neumann series argument

\section{Resolvent kernel as a polyhomogeneous conormal distribution} \label{blowup-sec}

\subsection{Resolvent of an elliptic boundary value problem}
Seeley \cite{See:TRO} provides a careful construction of the resolvent 
for the elliptic boundary problem $A_B$. More precisely, in view of 
\cite[Theorem 1, Lemma 2, (25), (32), (49))]{See:TRO} we may state the following 

\begin{theorem}\label{thm-seeley}
Let $\Gamma_R:= \{\mu\in \Gamma \mid |\mu|\geq R\}$. Consider a local coordinate
neighborhood $\mathscr{U}\subset [0,1)_x \times \partial M$ in the collar of the boundary,
with local coordinates $\{x,y=(y_1,..,y_{m-1})\}$. Then for any $j\in \N_0$ there exist 
$d_{-q-j} \in C^\infty (\mathscr{U} \times \R^m \times \Gamma, \textup{Hom}(E\restriction \mathscr{U}))$,
homogeneous of order $(-q-j)$ in $(x^{-1}, \zeta, \gamma, \mu)$, where 
$(x, y, \zeta, \gamma, \mu)\in \mathscr{U} \times \R^{m-1}_\zeta\times \R_\gamma \times \Gamma_\mu$, 
such that for $R>0$ sufficiently large and $\mu\in \Gamma_R$ the Schwartz kernel\footnote{The notation 
Op denotes the integral kernel of the operator rather than the operator itself.} 
\begin{align*}
&(A_B+\mu^q)^{-1}(x,y,\wx,\wy) - (A+\mu^q)^{-1}(x,y,\wx,\wy) \\
&- \sum_{j=0}^{N-1} (2\pi)^{-m} \int_{\R^{m-1}} \int_\R e^{i \langle y-\wy , \zeta \rangle} e^{-i \wx \gamma }
d_{-q-j}(x,y,\zeta,\gamma,\mu) d\gamma d\zeta\\
&=: K_{A_B}(x,y,\wx,\wy;\mu) - K_{A}(x,y,\wx,\wy;\mu) - \sum_{j=0}^{N-1} \textup{Op}(d_{-q-j}),
\end{align*}
is uniformly $O(|\mu|^{m-q-N})$ as $|\mu|\to \infty$. Here, the first term $K_{A_B}$ denotes the 
resolvent kernel of $A_B$ near the boundary, whereas the second term $K_A$ refers to an interior 
symbolic parametrix\footnote{The interior symbolic parametrix $K_A$ is obtained by inverting the 
symbol of $(A+\mu^q)$ algebraically by an iteration as in \cite[(5), (6)]{See:TRO}, and studying the corresponding operator. Such a parametrix is defined
independently of the boundary conditions and provides a good parametrix for $(A+\mu^q)^{-1}$
away from boundary. } defined without taking into account the boundary conditions. Note however that
the symbol corresponding to $K_A$ determines the choice of $d_{-q-j}, j\in \N_0$ by a system of differential 
equations \cite[(9),(10),(11)]{See:TRO}.
\end{theorem}

\subsection{Construction of the resolvent blowup space}
Assume for simplicity that $\mu$ varies along a ray within $\Gamma$, which we identify with $\R^+$.
Then the Schwartz kernel $K_{A_B}$ of the resolvent $(A_B +\mu^q)^{-1}$ is a distribution on $\R^+_{1/\mu} \times M^2$.
Choose local coordinates $(x,y)$ and $(\wx,\wy)$ on the two copies of $M$ in a collar neighborhood 
of the boundary, where $x$ and $\wx$ are the boundary defining functions. The Schwartz kernel $K_{A_B}$
admits a non-uniform behaviour at 
\begin{equation}
\label{D-C}
\begin{split}
&\mathscr{C}:=\{\mu=\infty, x=\wx=0, y=\wy\}, \\
&\mathscr{D}:=\{\mu=\infty, (x,y) = (\wx,\wy)\}. 
\end{split}
\end{equation}
This non-uniform 
behaviour is resolved by considering an appropriate blowup $\mathscr{M}^2_b$ of $\R^+\times M^2$
at $\mathscr{C}$ and $\mathscr{D}$, a procedure introduced by Melrose, see \cite{Mel:TAP}, 
such that the kernels $K_{A_B}, K_A$ lift to polyhomogeneous distributions on the manifold with corners 
$\mathscr{M}^2_b$ in the sense of the following definition.

\begin{defn}\label{phg}
Let $X$ be a manifold with corners, with all boundary faces embedded, and $\{(H_i,\rho_i)\}_{i=1}^N$ an enumeration 
of its boundaries and corresponding defining functions. For any multi-index $b= (b_1,
\ldots, b_N)\in \C^N$ we write $\rho^b = \rho_1^{b_1} \ldots \rho_N^{b_N}$.  Denote by $\mathcal{V}_b(X)$ the space
of smooth vector fields on $X$ which are
tangent to all boundary faces. All distributions on $X$ will be assumed to be restrictions of distributions 
defined on a closed manifold extending $X$. A distribution $\w$ on $X$ is said to be
conormal if $\w\in \rho^b L^\infty(X)$ for some $b\in \C^N$, and 
$V_1 \ldots V_\ell \w \in \rho^b L^\infty(X)$, for all $V_j \in \mathcal{V}_b(X)$ 
and for every $\ell \geq 0$. An index set 
$E_i = \{(\gamma,p)\} \subset {\mathbb C} \times {\mathbb N}$ 
satisfies the following hypotheses:
\begin{enumerate}
\item if $(\gamma_j,p_j) \in E_i$ and $|(\gamma_j,p_j)| \to \infty$ then $\textup{Re}(\gamma_j) \to \infty$,
\item if $(\gamma,p) \in E_i$, then $(\gamma+j,p') \in E_i$ for all $j \in \N_0$ and $0 \leq p' \leq p$,
\end{enumerate}
An index family $E = (E_1, \ldots, E_N)$ is an $N$-tuple of index sets. 
Finally, we say that a conormal distribution $w$ is polyhomogeneous on $X$ 
with index family $E$, we write $\w\in \mathscr{A}_{\textup{phg}}^E(X)$, 
if $\w$ is conormal and if in addition, near each $H_i$, 
\begin{align}\label{A}
\w \sim \sum_{(\gamma,p) \in E_i} a_{\gamma,p} \rho_i^{\gamma} (\log \rho_i)^p, \ 
\textup{as} \ \rho_i\to 0,
\end{align}
with coefficients $a_{\gamma,p}$ conormal on $H_i$, polyhomogeneous with index $E_j$
at any $H_i\cap H_j$\footnote{The asymptotic expansion for $\w$ means in precise terms
that the difference between $\w$ and any finite portion of the expansion vanishes at the
rate of the next term in the expansion, with a corresponding property for all higher
derivatives, and with decay order at other boundary faces determined by the index 
family $E$.}. 
\end{defn}

We also need to consider polyhomogeneous distributions on a manifold with corners $X$, 
conormal to an embedded submanifold $Y\subset X$. 
The basic space $I^m(\R^n, \{0\})$ consists of compactly supported 
distributions whose Fourier transform is given by a symbol of order $(m-n/4)$. 
$I^m(\R^n, \{0\})$ is invariant under
local diffeomorphisms and thus makes sense on any manifold around an isolated point.
\medskip

For an embedded $k$-submanifold $S\subset X$, any point in $S$ admits 
an open neighborhood $\mathscr{V}$ in $X$ which can be 
locally decomposed as a product $\mathscr{V}=X' \times X''$
so that $\mathscr{V} \cap S= X' \times \{p\}, p \in X''$. The
space $I^m(X,S)$ is defined (locally) as the space of smooth functions on $X'$ 
with values $I^{m+\dim X' /4}(X'',\{p\})$.
The normalization is chosen to give pseudo-differential
operators their expected orders. All distributions in $I^m(X,S)$ are locally restrictions of distributions
on an ambient space, which are conormal to any smooth extension of $S$ across $\partial X$.
\medskip

Choosing now index sets $E$ for each boundary face of $X$ as in Definition 
\ref{phg}, we define a space $\mathscr{A}_{\textup{phg}}^E(X,Y)$
as the space of distributions conormal to $Y$, with
polyhomogeneous expansions as in \Eqref{A} at all boundary faces and 
with coefficients conormal to the intersection of $Y$ with each boundary face. 
\medskip

We now continue with the definition of a blown-up space\footnote{Do not confuse 
$\mathscr{M}^2_b$ with the b-streched double space in \cite{Mel:TAP}} $\mathscr{M}^2_b$, to which 
the Schwartz kernels $K_A$ and $\textup{Op}(d_{-q-j})$ lift to polyhomogeneous distributions,
possibly conormal to an embedded submanifold. Blowing up $\R^+\times M^2$ at $\mathscr{C}$ and $\mathscr{D}$
amounts in principle to introducing polar coordinates in $\R^+\times (\R^+)^2$ 
at 
\begin{equation}
\label{D-C-model}
\begin{split}
&\mathscr{C}':=\{(1/ \mu, x,y) \in \R^+\times (\R^+)^2 \mid \mu=\infty, x=\wx=0, y=\wy\}, \\
&\mathscr{D}':=\{(1/ \mu, x,y) \in \R^+\times (\R^+)^2 \mid \mu=\infty, (x,y) = (\wx,\wy)\},
\end{split}
\end{equation} 
together with a unique minimal differential 
structure with respect to which these coordinates are smooth. \medskip

We first perform a blowup of $\mathscr{C}$.
The resulting space $[\R^+\times M^2; \mathscr{C}]$ is defined as the union of
$\R^+\times M^2 \setminus \mathscr{C}$ with the inward-pointing spherical normal bundle of 
$\mathscr{C}$ in $\R^+\times M^2$. The blowup $[\R^+\times M^2; \mathscr{C}]$ 
is endowed with the unique minimal differential structure with respect to which smooth 
functions in the interior of $\R^+\times M^2$ and polar coordinates on $\R^+\times M^2$ 
around $\mathscr{C}$ are smooth. This blowup introduces a new boundary hypersurface, 
which we refer to as the front face $\ff$. The other boundary faces are as follows. 
The union of ff and the right face $\rf$ is the lift of $\{x=0\}$. 
The union of ff and the left face $\lf$ is the lift of $\{\wx=0\}$. 
The union of ff and the temporal face $\tf$ is the lift of $\{\mu=\infty\}$.  \medskip

The actual blowup space $\mathscr{M}^2_b$ is obtained by a blowup of 
$[\R^+\times M^2; \mathscr{C}]$ along the lift of the diagonal $\mathscr{D}$. 
The resulting blowup space $\mathscr{M}^2_b$ is defined as before by cutting out 
the submanifold and replacing it with its inward-pointing spherical normal bundle. This second 
blowup introduces an additional boundary hypersurface td, the temporal diagonal. 
$\mathscr{M}^2_b$ is a manifold with boundaries and corners, illustrated below.

\begin{figure}[h]
\begin{center}
\begin{tikzpicture}
\draw (0,0.7) -- (0,2);
\draw (-0.7,-0.5) -- (-2,-1);
\draw (0.7,-0.5) -- (2,-1);
\draw (0,0.7) .. controls (-0.5,0.6) and (-0.7,0) .. (-0.7,-0.5);
\draw (0,0.7) .. controls (0.5,0.6) and (0.7,0) .. (0.7,-0.5);
\draw (-0.7,-0.5) .. controls (-0.5,-0.6) and (-0.4,-0.7) .. (-0.3,-0.7);
\draw (0.7,-0.5) .. controls (0.5,-0.6) and (0.4,-0.7) .. (0.3,-0.7);
\draw (-0.3,-0.7) .. controls (-0.3,-0.3) and (0.3,-0.3) .. (0.3,-0.7);
\draw (-0.3,-1.4) .. controls (-0.3,-1) and (0.3,-1) .. (0.3,-1.4);
\draw (0.3,-0.7) -- (0.3,-1.4);
\draw (-0.3,-0.7) -- (-0.3,-1.4);

\draw [very thick] (-0.2,0.85) -- (-0.2,1.35);
\draw [very thick] (-0.2,1.35) -- (-0.15,1.15);
\draw [very thick] (-0.2,1.35) -- (-0.25,1.15);
\draw [very thick] (-0.2,0.85) -- (-0.6,0.65);
\draw [very thick] (-0.6,0.65) -- (-0.5,0.75);
\draw [very thick] (-0.6,0.65) -- (-0.45,0.68);

\node at (-0.8,0.65) {$\xi$};
\node at (-0.45,1.35) {$\rho$};

\draw [very thick] (0.85,-0.4) -- (1.3,-0.58);
\draw [very thick]  (1.3,-0.58) -- (1.1,-0.55);
\draw [very thick]  (1.3,-0.58) -- (1.15,-0.45);
\draw [very thick]  (0.85,-0.4) -- (0.75,0.15);
\draw [very thick]  (0.75,0.15) -- (0.72,-0.05);
\draw [very thick]  (0.75,0.15) -- (0.85,-0.03);

\node at (0.95,0.15) {$\tau$};
\node at (1.5,-0.58) {$\wx$};

\node at (-1.3,0.1) {\large{lf}};
\node at (1.2,0.9) {\large{rf}};
\node at (1.1, -1.2) {\large{tf}};
\node at (-1.1, -1.2) {\large{tf}};
\node at (0, -1.7) {\large{td}};
\node at (0,0.1) {\large{ff}};
\end{tikzpicture}
\end{center}
\caption{The blowup $\mathscr{M}^2_b=[[\R^+\times M^2, \mathscr{C}], \mathscr{D}]$.}
\end{figure}

The blowup $\mathscr{M}^2_b$ is equipped with the canonical `blow-down map'
$\beta: \mathscr{M}^2_b\to \R^+\times M^2$, which we discuss below. \medskip

Denote by $Y:=\{(\mu, p, \widetilde{p}) \in \R^+\times M^2 \mid p=\widetilde{p}\}$ the diagonal 
submanifold and denote by $\beta^*Y$ its lift to $\mathscr{M}^2_b$. We denote by $\mathscr{A}_{\textup{phg}}^{l,p}(\mathscr{M}^2_b, \beta^*Y)$
the space of distributions that lift\footnote{A distribution $\w$ on $\R^+\times M^2$ lifts to a distribution $\beta^*\w$ on $\mathscr{M}^2_b$
by requiring $(\beta^*\w) (f \circ \beta) := \w (f)$ on any test function $f \in C^\infty_0(\R^+\times M^2)$. Note that a
(compactly supported) test function on $\mathscr{M}^2_b$ can always be written as $f\circ \beta$ for some test function $f$ on $\R^+\times M^2$.} 
to polyhomogeneous conormal distributions on the blowup space $\mathscr{M}^2_b$,
with leading order $(-m+l)$ at the front face $\ff$, leading order $(-m+p)$ at the temporal diagonal $\td$, 
index sets $(\N_0, \N_0)$ at the left and right boundary faces, vanishing to infinite order at tf, 
conormal at the interior submanifold $\beta^*Y$.
The space of such distributions without a conormal singularity is denoted by 
$\mathscr{A}_{\textup{phg}}^{l,p}(\mathscr{M}^2_b)$.
\medskip  

A choice of projective coordinates on $\mathscr{M}^2_b$ is given as follows. 
Near the top corner of ff away from tf projective coordinates are given by
\begin{align}\label{top-coord}
\rho=\frac{1}{\mu}, \  \xi=\frac{x}{\rho}, \ \widetilde{\xi}=\frac{\wx}{\rho}, \ w= \frac{y-\wy}{\rho}, \ \wy,
\end{align}
where in these coordinates $\rho, \xi, \widetilde{\xi}$ are the defining functions of 
the faces ff, rf and lf respectively. For the bottom corner of ff near rf  
projective coordinates are given by
\begin{align}\label{right-coord}
\tau=(\mu \wx)^{-1}, \ s=\frac{x}{\wx}, \ u = \frac{y-\wy}{\wx}, \ \wx, \ \wy,
\end{align}
where in these coordinates $\tau, s, \wx$ are defining functions of tf, rf and ff respectively. 
For the bottom corner of ff near lf projective coordinates are obtained by interchanging 
the roles of $x$ and $\wx$. Projective coordinates on $\mathscr{M}^2_b$ near the top of td away 
from tf are given by 
\begin{align}\label{d-coord}
\eta=\tau, \ S =\frac{s-1}{\eta}, \ U = \frac{u}{\eta}, \ \wx, \ \wy.
\end{align}

The temporal face tf is not in this coordinate chart but corresponds to the limit $|(S,U)|\to \infty$. 
The boundary faces ff and td are defined by 
$\wx, \eta$, respectively. The blowup $\mathscr{M}^2_b$ is related to the original 
space $\R^+\times M^2$ via the obvious `blow-down map'
\[
\beta: \mathscr{M}^2_b\to \R^+\times M^2,
\]
which is in local coordinates simply the coordinate change back to $(1/\mu, (x,y), (\wx,\wy))$. 
The blowup $\mathscr{M}^2_b$ is similar to the blowup space construction for incomplete
conical singularities by Mooers \cite{Moo:HKA} with the difference that here the 
blowup is not parabolic in $\mu^{-1}-$direction. It will be crucial in the argument below to note that
the restriction of $\beta$ to ff is a fibration over the diagonal of $\partial M \times \partial M$.
with the fibre equal to the quarter sphere $\mathbb{S}^m_{++}$. Similarly, the restriction of 
$\beta$ to td is a vibration over the diagonal of $M \times M$ with the fibre equal to the hemisphere
$\mathbb{S}^m_+$. 

\begin{theorem}\label{2-3}
Consider the elliptic boundary value problem $(A,B)$ with a differential operator $A$
of order $q$ on a compact manifold $M$ of dimension $m$ with boundary $\partial M$. Then the 
resolvent kernel $K_{A_B}$ and the Schwartz kernel $K_A$ corresponding to an interior symbolic parametrix kernel
are both elements of $\mathscr{A}_{\textup{phg}}^{q, q}(\mathscr{M}^2_b, \beta^*Y)$, 
where $Y:=\{(\mu, p, \widetilde{p}) \in  \R^+\times M^2 \mid p=\widetilde{p} \}$ denotes the diagonal 
submanifold. Moreover, their difference $K_{A_B} - K_A =:K \in \mathscr{A}_{\textup{phg}}^{q, \infty}(\mathscr{M}^2_b)$.
\end{theorem}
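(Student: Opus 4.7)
The plan is to exploit Seeley's decomposition from Theorem~\ref{thm-seeley},
\[
K_{A_B} = K_A + \sum_{j=0}^{N-1}\textup{Op}(d_{-q-j}) + R_N, \qquad |R_N| = O(|\mu|^{m-q-N}),
\]
and verify the claim for each of the principal summands $K_A$ and $\textup{Op}(d_{-q-j})$ separately on the blowup $\mathscr{M}^2_b$. The polyhomogeneity of $K_{A_B}$ will then follow by asymptotic summation, and the sharper statement for $K_{A_B}-K_A$ from the fact that every correction term is smooth transversally to the lifted diagonal.

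First I would treat the interior parametrix $K_A$. It is the Schwartz kernel of a classical parameter-dependent pseudodifferential operator with full symbol jointly homogeneous in $(\xi,\mu)$, constructed without reference to the boundary; hence $K_A\in\mathscr{A}_{\textup{phg}}^{q,q}(\mathscr{M}_b^2,\beta^*Y)$ is the standard parametrix analysis. In the projective coordinates \eqref{d-coord} near td, the substitution $\xi \mapsto \eta^{-1}\hat\xi$ in the oscillatory integral pulls out the predicted factor $\eta^{-m+q}$ and leaves an integrand producing the expected conormal singularity of order $q$ at $\{S=0, U=0\}$; away from td, repeated integration by parts in the phase gives the claimed polyhomogeneous behaviour at ff and index-set-$\N_0$ expansions at the remaining boundary faces.

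Next I would analyze each $\textup{Op}(d_{-q-j})$. In the top-of-ff coordinates \eqref{top-coord}, the substitution $\zeta=\hat\zeta/\rho$, $\gamma=\hat\gamma/\rho$ combined with the joint homogeneity of $d_{-q-j}$ of order $(-q-j)$ in $(x^{-1},\zeta,\gamma,\mu)=\rho^{-1}(\xi^{-1},\hat\zeta,\hat\gamma,1)$ pulls out an overall factor $\rho^{-m+q+j}$ with a smooth amplitude on ff and rapid decay in $(\hat\zeta,\hat\gamma)$. The same scaling in the coordinates \eqref{right-coord} and its reflection near lf yields polyhomogeneity at rf and lf with integer index sets $\N_0$, coming from the Taylor expansion of $d_{-q-j}$ in the ratio $s=x/\wx$. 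Crucially, the phase $e^{i\langle y-\wy,\zeta\rangle-i\wx\gamma}$ contains no $x-\wx$ oscillation, so $\textup{Op}(d_{-q-j})$ is smooth transversally to the lifted diagonal; moreover, in the coordinates \eqref{d-coord} the same scaling converts $e^{-i\wx\gamma}$ into $e^{-i\hat\gamma/\eta}$, whose oscillation as $\eta\to 0$ drives $\textup{Op}(d_{-q-j})$ to vanish to infinite order at td by stationary phase. The hard part is reconciling the local expansions in the corners ff\,$\cap$\,tf and ff\,$\cap$\,rf; this is handled by observing that joint homogeneity of the Seeley symbols corresponds precisely to smoothness of their compactification at the appropriate corner, in the spirit of Mooers \cite{Moo:HKA} but with non-parabolic scaling in $\mu^{-1}$.

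Combining these, each $\textup{Op}(d_{-q-j}) \in \mathscr{A}_{\textup{phg}}^{q+j,\infty}(\mathscr{M}^2_b)$ without a conormal singularity. Applying the Borel lemma for polyhomogeneous expansions on manifolds with corners to $K_A + \sum_{j<N}\textup{Op}(d_{-q-j})$, together with the remainder bound $|R_N| = O(\rho^{-m+q+N})$ and analogous estimates for $\mathcal{V}_b$-derivatives of $R_N$ (built into Seeley's construction), one concludes $K_{A_B}\in\mathscr{A}_{\textup{phg}}^{q,q}(\mathscr{M}^2_b,\beta^*Y)$ and further $K_{A_B}-K_A\in\mathscr{A}_{\textup{phg}}^{q,\infty}(\mathscr{M}^2_b)$ as claimed.
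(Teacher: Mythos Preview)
Your overall strategy --- decompose via Seeley, lift each $\textup{Op}(d_{-q-j})$ to $\mathscr{M}^2_b$ in projective coordinates, then sum asymptotically using the remainder bound --- is exactly the paper's approach. The analysis of $K_A$ and the front-face order of the boundary terms is fine.

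There is, however, a genuine gap in your argument for the infinite-order vanishing of $\textup{Op}(d_{-q-j})$ at $\td$ (and, implicitly, at $\tf$). You invoke stationary phase on the oscillatory factor $e^{-i\hat\gamma/\eta}$, but in the coordinates \eqref{d-coord} the amplitude is $d_{-q-j}\bigl(S+\eta^{-1},y,\hat\zeta,\hat\gamma,1\bigr)$, which itself depends singularly on $\eta$ through its first argument. Integration by parts in $\hat\gamma$ gains a factor $\eta$ per step, but you have no uniform control on the $\hat\gamma$-derivatives of the amplitude as the first argument tends to infinity, so the argument as written does not close. The same problem arises at $\tf$: ``rapid decay in $(\hat\zeta,\hat\gamma)$'' of the integrand does not by itself force vanishing as $\tau\to 0$.

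The paper avoids this by first performing the $\gamma$-integral, passing to Seeley's kernel $\widetilde d_{-q-j}(x,y,\zeta,\wx,\mu)$, which is jointly homogeneous in $(x^{-1},\wx^{-1},\zeta,\mu)$ and, crucially, satisfies the exponential estimate (Seeley's (29), recorded in the paper as \eqref{d-tilde}):
\[
\bigl|\widetilde d_{-q-j}\bigr|\;\le\;C\,e^{-c(x+\wx)(|\zeta|+\mu)}\,(|\zeta|+\mu)^{1-q-j}.
\]
After lifting to $\mathscr{M}^2_b$, this becomes $e^{-c'/\tau}$ near $\tf$ and $e^{-c'/\eta}$ near $\td$, which is what yields the infinite-order vanishing at both faces and, simultaneously, absolute convergence of the remaining $\zeta$-integral (hence no conormal singularity at $\beta^*Y$). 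So the correct mechanism is the exponential decay of Seeley's boundary-layer symbols, not an oscillatory-integral argument; once you substitute \eqref{d-tilde} for the stationary-phase step, your proof goes through and matches the paper.
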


\begin{proof}
For $R>0$ sufficiently large and $\mu\in \Gamma_R$, we may write according to 
\cite[(28)]{See:TRO} 
\begin{equation}
\textup{Op}(d_{-q-j}) = (2\pi)^{-m} \int_{\R^{m-1}} e^{i\langle y-\wy, \zeta\rangle}
\widetilde{d}_{-q-j} (x, y, \zeta, \wx, \mu) \, d\zeta, 
\end{equation}
where $\widetilde{d}_{-q-j}$ is homogeneous of degree $(-q-1)$ in $(x^{-1}, \wx^{-1}, \zeta, \mu)$.
Moreover, \cite[(29)]{See:TRO} asserts the following estimate
\begin{equation}\label{d-tilde}
\begin{split}
\left|x^i \wx^k\partial^\A_x\partial^\beta_{\wx} \partial^\gamma_{\zeta} \partial^\delta_\mu 
\widetilde{d}_{-q-j} (x, y, \zeta, \wx, \mu)\right| &\leq C \exp \left(-c(x+\wx)(|\zeta|+ \mu)\right) \\
&\times (|\zeta| + \mu)^{1-q-j-k-i+ \A+\beta-|\gamma|-\delta},
\end{split}
\end{equation}
with constants $c,C>0$. This estimate is stable under differentiation in $y\in \R^{m-1}$.
We may now study the asymptotics of the lift $\beta^*\textup{Op}(d_{-q-j})$ in the various 
projective coordinates near the front face of $\mathscr{M}^2_b$. For instance, in coordinates
\eqref{right-coord} we find
\begin{equation}
\beta^*\textup{Op}(d_{-q-j}) = (2\pi)^{-m} \wx^{-m+q+j} 
\int_{\R^{m-1}} e^{i\langle u, \nu\rangle}
\widetilde{d}_{-q-j} (s, y, \nu, 1, \tau^{-1}) \, d\nu. 
\end{equation}
Hence the lift $\beta^*\textup{Op}(d_{-q-j})$ is of order $(-m+q+j)$ at the front face $\ff$ ($\wx\to 0$), 
and smooth at $\rf$ ($s\to 0$). In view of the estimate \eqref{d-tilde} the expression is also vanishing to 
infinite order at the temporal face $\tf$ ($\tau \to 0$). Similarly, in coordinates \eqref{d-coord} 
\begin{equation}
\beta^*\textup{Op}(d_{-q-j}) = (2\pi)^{-m} (\eta\wx)^{-m+q+j} 
\int_{\R^{m-1}} e^{i\langle U, \nu\rangle}
\widetilde{d}_{-q-j} (S+\eta^{-1}, y, \nu, \eta^{-1}, 1) \, d\nu. 
\end{equation}
Hence the lift $\beta^*\textup{Op}(d_{-q-j})$ is of order $(-m+q+j)$ at the front face $\ff$ ($\wx\to 0$). 
In view of the estimate \eqref{d-tilde} the expression is also vanishing to 
infinite order at the temporal face $\tf$ ($|(S,U)|\to \infty$) as well as temporal diagonal $\td$ ($\eta \to 0$). 
\medskip

Summarizing we have shown $\beta^*\textup{Op}(d_{-q-j}) \in \mathscr{A}_{\textup{phg}}^{q+j, \infty}(\mathscr{M}^2_b)$. 
Similar arguments applied to the classical symbol expansion of the interior parametrix $K_A=(A+\mu^q)^{-1}$
justify $K_A \in \mathscr{A}_{\textup{phg}}^{q, q}(\mathscr{M}^2_b, \beta^*Y)$. Since $\beta^*\mu = \rho_\ff \rho_\td \rho_\tf$,
up to multiplication with a bounded function, we infer from Theorem \ref{thm-seeley}
\begin{equation}
\beta^*K_{A_B} = \beta^*K_A + \sum_{j=0}^{N-1} \beta^*\textup{Op}(d_{-q-j})
+ (\rho_\ff \rho_\td \rho_\tf)^{-m+q+N} Q_N, 
\end{equation}
as $(\rho_\ff, \rho_\td, \rho_\tf) \to 0$, where $Q_N$ denotes some conormal distribution 
on $\mathscr{M}^2_b$ which is bounded uniformly in $N$ at all the boundary faces.
Taking the limit $N\to \infty$ proves the statement.
\end{proof}

\section{Composition of polyhomogeneous Schwartz kernels} \label{comp-sec}

Let $X$ and $X'$ be two compact manifolds with corners, and let $f: X \to X'$ 
be a smooth map.  Let $\{H_i\}_{i\in I}$ and $\{H'_j\}_{j\in J}$ be enumerations of the codimension one boundary faces of $X$ and $X'$,
respectively, and let $\rho_i$, $\rho'_j$ be global defining functions for $H_i$, resp.\ $H'_j$. We say that the map $f$
is a $b$-map if for all $j\in J$ there exists a smooth positive function $a_j$ such that
\[
f^* \rho_j' = a_{j} \prod_{i\in I} \rho_j^{e(i,j)}, \quad e(i,j) \in \mathbb{N} \cup \{0\}.
\]
The map $f$ is called a $b$-submersion if $f_*$ induces a surjective map between the $b$-tangent bundles
of $X$ and $X'$. The notion of $b$-tangent bundles has been introduced in \cite[Lemma 2.5]{Mel:TAP}. Assume moreover that
for each $j$ there is at most one $i$ such that $e(i,j) \neq 0$. In other words no
submanifold in $X$ gets mapped to a corner in $X'$. Under this condition the $b$-submersion $f$ is called a $b$-fibration. 
\medskip

Suppose that $\nu_0$ is a density on $X$ which is smooth up to all boundary faces and everywhere nonvanishing. 
A smooth $b$-density $\nu_b$ is, by definition, any density of the form $\nu_b =\nu_0 (\Pi \rho_i)^{-1}$.  
Let us fix smooth $b$-densities $\nu_b$ on $X$ and $\nu_b'$ on $X'$. 

\begin{prop}\label{push}\cite[The Pushforward Theorem]{Mel:COC}
Let $f_b:X\to X'$ be a $b$--fibration.
Let $u$ be a polyhomogeneous function on $X$ with index sets  $E_i$ the faces $H_i$ of $X$.  Suppose that each $(z,p) \in E_i$
has $\mbox{Re}\, z > 0$ if $e(i,j) = 0$ for all $j\in J$.
Then the pushforward $f_* (u \nu_b)$ is well-defined and equals $h \nu_b'$ where $h$ is polyhomogeneous on $X'$ and has
an index family $f_b(\mathcal{E})$ given by an explicit formula in terms of the index family $\mathcal{E}$ for $X$.
\end{prop}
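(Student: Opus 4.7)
The plan is to reduce the global statement to a collection of local model computations, using the fact that a $b$-fibration admits a very rigid normal form in suitable boundary coordinates. Concretely, near any point $p \in X$ one chooses adapted coordinates $(x_1,\ldots,x_k,y_1,\ldots,y_\ell)$ with the $x_i$ boundary defining functions for the faces through $p$, and analogous coordinates on $X'$, in which $f_b$ takes the form $(f_b)^*\rho_j' = A_{ij}\prod_i x_i^{e(i,j)}$ with, by the $b$-fibration hypothesis, at most one nonzero exponent for each $j$. Composing with diffeomorphisms and splitting off factors, the nontrivial content of the pushforward is reduced to two elementary building blocks: (i) projection $(t,y)\mapsto y$ off an interior factor over a compact interval, which obviously preserves polyhomogeneity, and (ii) the single-variable $b$-integral $\int_0^1 s^z(\log s)^p\, ds/s$ on a boundary factor that is being integrated away.

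\medskip

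The key analytic computation concerns the elementary $b$-integral. Viewed as a function of $z$, the map $z \mapsto \int_0^1 s^z(\log s)^p\, ds/s$ is defined for $\mathrm{Re}\,z>0$ and extends meromorphically to $\C$ with a pole of order $p+1$ at $z=0$. Combined with a cutoff splitting $[0,1]=[0,1/2]\cup[1/2,1]$ and Taylor expansion of a smooth factor, this calculation shows that the polyhomogeneous expansion of $u$ at each face $H_i$ with $e(i,j)=0$ for all $j$ is actually integrable (this is precisely where the hypothesis $\mathrm{Re}\,z>0$ for $(z,p)\in E_i$ enters, placing us on the convergent side of the pole), and that for a surviving face $H_i$ with $e(i,j)>0$ the variable $x_i$ rescales to $(x_j')^{1/e(i,j)}$, yielding a polyhomogeneous expansion at $H_j'$ with index set $E_i/e(i,j)$.

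\medskip

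The main obstacle is the combinatorics of index sets when several boundary hypersurfaces of $X$ all map nontrivially onto the same hypersurface $H_j'$ of $X'$. The pushforward then acquires the \emph{extended union}
\[
\overline{\bigcup}_{\,i:\, e(i,j)>0}\bigl(E_i/e(i,j)\bigr),
\]
where the extension is needed because two exponents $(z,p)\in E_i$ and $(z,q)\in E_{i'}$ with matching real part, together with the pole calculus of the previous step, contribute an additional logarithmic factor of order $p+q+1$. The same bookkeeping must then be carried out inductively on the depth of the corner, since lower-depth faces of $X'$ inherit their expansions from intersections of faces in $X$, and each iterated $b$-integral can stack further logarithms. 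This is the formula $f_b(\mathcal{E})$ referenced in the statement.

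\medskip

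Finally, a partition of unity argument globalizes: since polyhomogeneity, conormality, and the $b$-density property are all local, summing the local contributions over a finite cover by coordinate charts adapted to $f_b$ produces the global polyhomogeneous density $h\,\nu_b'$ on $X'$. Smoothness of the transition data transverse to the boundary faces, which is automatic from the $b$-fibration structure, ensures that the local index families patch to give the global index family $f_b(\mathcal{E})$.
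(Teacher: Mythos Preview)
The paper does not prove this proposition at all: it is stated as a citation of Melrose's Pushforward Theorem from \cite{Mel:COC}, with no proof or sketch given. So there is no ``paper's own proof'' to compare against; the author simply invokes the result as a black box and then, immediately after the statement, records the special case of the index-set formula (the extended union $E_{i_1}\,\overline{\cup}\,E_{i_2}$) that is actually used later.

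Your sketch is a faithful outline of the standard proof strategy in Melrose's work: local normal form for a $b$-fibration, reduction to the model $b$-integral $\int_0^1 s^z(\log s)^p\,\frac{ds}{s}$, the pole at $z=0$ producing the extra logarithm responsible for the extended-union combinatorics, and globalization by partition of unity. That is the right architecture, and nothing in it is wrong in spirit. If you were to flesh it out, the places that need genuine work are (a) the precise normal-form lemma for $b$-fibrations (this is where $b$-submersion plus the exponent condition is used, and it is not entirely trivial), and (b) the uniform remainder estimates showing that the formal expansion obtained from the model integrals really is the asymptotic expansion of the pushforward, i.e.\ that truncating at finite order leaves a conormal remainder of the correct weight. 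As written your sketch asserts these but does not carry them out; for the purposes of this paper that is fine, since the author is content to cite the theorem.
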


Rather than giving the formula for the image index set in general, we provide the index image set in a specific setup,
enough for the present situation.  If $H_{i_1}$ and $H_{i_2}$ are both mapped to a face $H'_j$, and if 
$H_{i_1} \cap H_{i_2} = \emptyset$,
then they contribute the index set $E_{i_1} + E_{i_2}$ to $H'_j$. If they do intersect, however, then the contribution is the
extended union $E_{i_1} \overline{\cup} E_{i_2}$
\begin{align*}
E_{i_1} \overline{\cup} E_{i_2} := E_{i_1} \cup E_{i_2} \cup \{((z, p + q + 1): \exists \, (z,p) \in E_{i_1},\ 
\mbox{and}\  (z,q) \in E_{i_2} \}.
\end{align*}

We now employ the Pushforward theorem to establish the following fundamental composition result.
Let $Y:=\{(\mu, p, \widetilde{p}) \in \R^+\times M^2 \mid p=\widetilde{p}\}$ denote the diagonal 
submanifold. Consider any $K_a \in \mathscr{A}_{\textup{phg}}^{\ell,k}(\mathscr{M}^2_b, \beta^*Y)$ 
and $K_b \in \mathscr{A}_{\textup{phg}}^{\ell',\infty}(\mathscr{M}^2_b)$. 
Their composition is defined by 
\begin{equation}
K_{c}(p, \widetilde{p};\mu) = \int_M K_a(p, p';\mu) K_b(p',\widetilde{p};\mu) \, \textup{dvol}_M(p').
\label{comp1}
\end{equation}

\begin{prop}
\begin{align}
\mathscr{A}_{\textup{phg}}^{\ell,k}(\mathscr{M}^2_b, \beta^*Y) \circ \mathscr{A}_{\textup{phg}}^{\ell',\infty}(\mathscr{M}^2_b) 
\subset \mathscr{A}_{\textup{phg}}^{\ell+\ell', \infty}(\mathscr{M}^2_b).
\label{composition}
\end{align}
\end{prop}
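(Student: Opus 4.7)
The strategy is the Mazzeo--Melrose triple-space approach: express the composition as a pushforward along a $b$-fibration and invoke Proposition \ref{push}. Write $M^3 = M_L \times M_C \times M_R$ and construct a triple blowup space $\mathscr{M}^3_b$ of $\R^+\times M^3$, equipped with three smooth projections $\pi_L, \pi_C, \pi_R: \mathscr{M}^3_b \to \mathscr{M}^2_b$ omitting the left, middle, or right factor respectively. Then $K_a$ lifts via $\pi_R^*$ (keeping $L,C$) and $K_b$ via $\pi_L^*$ (keeping $C,R$), and \eqref{comp1} becomes
\[
K_c \;=\; (\pi_C)_* \bigl( \pi_R^* K_a \cdot \pi_L^* K_b \cdot \nu_b \bigr)
\]
for a suitable smooth $b$-density $\nu_b$ on $\mathscr{M}^3_b$.

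The construction of $\mathscr{M}^3_b$ follows the iterative blowup pattern of $\mathscr{M}^2_b$: first blow up the triple corner at infinity $\{\mu=\infty,\ x_L=x_C=x_R=0,\ y_L=y_C=y_R\}$, then the three analogous double corners, and finally the triple and pairwise spatial diagonals at $\mu=\infty$. With the blowups performed in this order, each projection $\pi_{\bullet}$ lifts to a $b$-map, and in fact to a $b$-fibration, the key check being that no hypersurface of $\mathscr{M}^3_b$ is mapped onto a codimension-two corner of $\mathscr{M}^2_b$. From the geometry one then reads off the boundary exponent matrices $e(i,j)$ relating the faces of $\mathscr{M}^3_b$ to those of $\mathscr{M}^2_b$ under each $\pi_{\bullet}$.

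Next I would compute the index sets of $\pi_R^*K_a \cdot \pi_L^* K_b$ at each face of $\mathscr{M}^3_b$ as the sum, with extended union at intersecting faces, of the pulled-back indices of the two factors. A crucial simplification is that $K_b$ vanishes to infinite order at $\td \subset \mathscr{M}^2_b$; hence $\pi_L^* K_b$ vanishes to infinite order at every face of $\mathscr{M}^3_b$ lying over $\td$. The Pushforward Theorem applied to $\pi_C$ then delivers a polyhomogeneous distribution on $\mathscr{M}^2_b$, and bookkeeping of the exponents yields leading order $\ell+\ell'$ at $\ff$, infinite-order vanishing at $\td$, and the index set $\N_0$ at $\lf,\rf$, as required.

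The principal technical obstacle is accommodating the conormal singularity of $K_a$ along $\beta^*Y$, since Proposition \ref{push} as stated applies to polyhomogeneous distributions without interior conormal singularities. I would handle this by performing an additional blowup of $\mathscr{M}^3_b$ along the lift of the $(p_L,p_C)$-diagonal, promoting the conormal singularity of $\pi_R^* K_a$ to polyhomogeneity at a new boundary face. The interior smoothness of $\pi_L^* K_b$ off $\td$, together with its rapid vanishing at $\td$, will ensure the positivity hypothesis of Proposition \ref{push} at this new face, so that the pushforward under $\pi_C$ is well-defined and produces no interior conormal singularity in $K_c$. Verifying this positivity uniformly and confirming that $\pi_C$ remains a $b$-fibration after the additional blowup will be the most delicate point.
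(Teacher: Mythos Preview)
Your strategy is the right one and essentially matches the paper's, but the paper executes it more economically in one key respect. Rather than projecting to the full double space $\mathscr{M}^2_b$, the paper projects to the \emph{reduced} space $\mathscr{M}^2_{rb}=[\R^+\times M^2,\mathscr{C}]$, i.e.\ before the blowup of $\mathscr{D}$ that creates $\td$. Consequently the triple space in the paper carries \emph{no} diagonal blowups at all---only the triple corner $F$ and the three double corners $F_a,F_b,F_c$ are blown up---and the three projections are $b$-fibrations onto $\mathscr{M}^2_{rb}$. The price is that the lift of $K_a$ is not polyhomogeneous on $\mathscr{M}^2_{rb}$ (its $\td$-behaviour now sits unresolved inside the temporal face $\tf$), but this is precisely where the infinite-order vanishing of $K_b$ at $\td$---hence at $\tf$ on the reduced space---comes to the rescue: the \emph{product} $\Pi_a^*\kappa_a\cdot\Pi_b^*\kappa_b$ is polyhomogeneous and vanishes to infinite order at the relevant faces, so the Pushforward Theorem applies directly. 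The remaining interior conormal singularity of $K_a$ along the diagonal is then handled not by a further blowup but by invoking \cite[Proposition~B7.20]{EMM:ROT}, which is essentially the statement that pushing forward a conormal distribution against a smooth factor yields a smooth result.

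Your route---blowing up all the diagonals in the triple space and then an additional blowup to resolve the conormal singularity---should also work, but it is heavier: you acquire many more boundary faces, the $b$-fibration verification becomes more involved, and you must track possible logarithms from the extended unions at the new faces. The paper's shortcut of targeting $\mathscr{M}^2_{rb}$ and exploiting the $\td$-vanishing of $K_b$ sidesteps all of this.
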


\begin{proof}
Consider $K_a \in \mathscr{A}_{\textup{phg}}^{l,k}(\mathscr{M}^2_b, \beta^*Y)$ 
and $K_b \in \mathscr{A}_{\textup{phg}}^{l',\infty}(\mathscr{M}^2_b)$. Their 
composition $K_c = K_a \circ K_b$ is defined in \eqref{comp1}.
This expression can be rephrased in geometric terms. 
Consider the space $\R^+_{1/\mu}\times M^3_{(p,p',\widetilde{p})}$, and the three projections
\begin{equation}
\begin{split}
&\pi_c :\R^+_{1/\mu}\times M^3_{(p,p',\widetilde{p})} \to \R^+_{1/\mu}\times M^2_{(p,\widetilde{p})}, \\
&\pi_a: \R^+_{1/\mu}\times M^3_{(p,p',\widetilde{p})} \to \R^+_{1/\mu}\times M^2_{(p,p')}, \\
&\pi_b: \R^+_{1/\mu}\times M^3_{(p,p',\widetilde{p})} \to \R^+_{1/\mu}\times M^2_{(p',\widetilde{p})}.
\end{split}
\label{projections}
\end{equation} 
We reinterpret $K_a, K_b$ and $K_c$ as `right densities'
\begin{align*}
&K_a \equiv K_a(p, p';\mu) \, \textup{dvol}_M(p'), \\
&K_b \equiv K_b(p',\widetilde{p};\mu) \, \textup{dvol}_M(\widetilde{p}), \\
&K_c \equiv K_c(p, \widetilde{p};\mu) \textup{dvol}_M(\widetilde{p}).
\end{align*}
Then we can rewrite \eqref{comp1} as
\[
K_c = (\pi_c)_* \left( \pi_a^* K_a  \cdot \pi_b^* K_c \right).
\]
The basic idea in the proof of polyhomogeneity of $K_c$ is a construction of a triple-space
$\mathscr{M}^3_b$ which is a blowup of $\R^+_{1/\mu}\times M^3$ obtained 
by a sequence of blowups, designed such that there are maps
\[
\Pi_a, \Pi_c, \Pi_b:  \mathscr{M}^3_b \longrightarrow 
[\R^+_{1/\mu}\times M^2; \mathscr{C}]=:  \mathscr{M}^2_{rb}
\]
which `cover' the three projections defined above. 
The construction is reminiscent of the triple space construction for the heat space calculus for conical 
singularities, see \cite{Moo:HKA}, but differs from the latter since there is no convolution 
in the parameter $\mu^{-1}$ variable and the blowups are not parabolic in the $\mu^{-1}$
direction. On each copy of $M$ we use the local coordinates 
$p=(x,y), p'=(x',y'), \widetilde{p}=(\wx, \wy) \in \R^+\times \R^{m-1}$ near the 
boundary $\partial M$ with $(x,x',\wx)$ being the three copies of the boundary defining function. 
First we blow up the submanifold
\begin{align*}
F&=\{(1/\mu, x,y, x',y', \wx, \wy) \mid \mu = \infty, x=x'=\wx=0, y=y'=\wy\} \\ &= 
\pi_a^{-1}\mathscr{C} \cap \pi_b^{-1}\mathscr{C} \cap \pi_c^{-1}\mathscr{C},
\end{align*}
which is the intersection of all highest codimension corners $\mathscr{C}$ introduced in \eqref{D-C}, 
pulled back under the three 
projections to $\R^+\times M^3$. Then we blow up the resulting space $[\R^+\times M^3; F]$ at the 
interior lifts of each of the three submanifolds
\begin{equation}
\begin{split}
F_c&=\pi_c^{-1}\mathscr{C} = \{(1/\mu, x,y, x',y', \wx, \wy) \mid \mu = \infty, x=\wx=0, y=\wy\}, \\
F_a&=\pi_a^{-1}\mathscr{C} = \{(1/\mu, x,y, x',y', \wx, \wy) \mid \mu = \infty, x=x'=0, y=y'\}, \\
F_b&=\pi_b^{-1}\mathscr{C} = \{(1/\mu, x,y, x',y', \wx, \wy) \mid \mu = \infty, x'=\wx=0, y'=\wy\}.
\end{split}
\end{equation} 
Identifying notationally each $F_{a,b,c}$ with their lifts to $[\R^+\times M^3, F]$, we may 
altogether define the triple space
\[
\mathscr{M}^3_b := \left[\left[\R^+\times M^3; F\right]; F_a; F_b; F_c\right].
\]

If we ignore the $\mu^{-1}-$direction, the spacial part of $\mathscr{M}^3_b$ 
can be visualized as below.
\begin{figure}[h]
\begin{center}
\begin{tikzpicture}

\draw  (-1,0) .. controls (-0.8,0) and (-0.5,-0.3) .. (-0.5,-0.5);
\draw  (1,0) .. controls (0.8,0) and (0.5,-0.3) .. (0.5,-0.5);
\draw  (-0.4,0.8) .. controls (0,0.6) and (0,0.6) .. (0.4,0.8);

\draw  (-1,0) -- (-2,-0.5);
\draw  (-0.5,-0.5) -- (-1.5,-1);

\draw  (1,0) -- (2,-0.5);
\draw  (0.5,-0.5) -- (1.5,-1);

\draw   (-0.4,0.8)  --  (-0.4,1.8) ;
\draw   (0.4,0.8)  --  (0.4,1.8);

\draw  (-1,0) .. controls (-1,0.2) and (-0.6,0.8) ..  (-0.4,0.8);
\draw  (1,0) .. controls (1,0.2) and (0.6,0.8) ..  (0.4,0.8);
\draw  (-0.5,-0.5) .. controls (-0.5,-0.6) and (0.5,-0.6) .. (0.5,-0.5);

\node at (0,0) {111};
\node at (-2,-0.9) {011};
\node at (2,-0.9) {110};
\node at (0,2) {101};
\node at (0,-1) {010};
\node at (-1.1,0.9) {001};
\node at (1.1,0.9) {100};
\end{tikzpicture}
\end{center}
\label{triple-space}
\caption{The spacial component of the triple space $\mathscr{M}^3_b$.}
\end{figure} 

Here, $(101), (011)$ and $(110)$ label the boundary faces created by blowing up $F_c, F_b$ and $F_a$, respectively. 
The face $(111)$ is the front face introduced by blowing up $F$. We denote the defining function for the face $(ijk)$  
by $\rho_{ijk}$. The triple space comes with a natural blowdown map 
$\beta^{(3)}: \mathscr{M}^3_b \to \R^+ \times M^3$, which as in the discussion of $\mathscr{M}^2_b$
amounts in local coordinates to a coordinate change back to $(x,y, x', y',\wx, \wy, \mu)$.
\medskip

Now consider the projections $\pi_c$, $\pi_a$ and $\pi_b$ introduced in \eqref{projections}.  
These induce projections $\Pi_c$, $\Pi_a$ and $\Pi_b$ from 
$\mathscr{M}^3_b$ to the reduced blowup space $\mathscr{M}^2_{rb}$. 
It is not hard to check that the choice of submanifolds $F, F_{a,b,c}$ that have been blown up ensures that these 
projections are in fact $b$-fibrations. \medskip

Denote defining functions for the right, front and left faces of each copy of $\mathscr{M}^2_{rb}$ by 
$\{\rho_{10}, \rho_{11}, \rho_{01}\}$, respectively.  These lift via the projections according to the following rules
(modulo multiplication by a non-vanishing function)
\begin{equation}
\begin{split}
&\Pi_c^*(\rho_{ij})=\rho_{i0j}\rho_{i1j}, \\ 
&\Pi_a^*(\rho_{ij})=\rho_{ij0}\rho_{ij1}, \\ 
&\Pi_b^*(\rho_{ij})=\rho_{0ij}\rho_{1ij}.
\end{split}
\label{RLC}
\end{equation}

Now consider the behaviour in the parameter $\mu^{-1}$-direction. Let $\tau$ be a defining function 
for the boundary face in $\mathscr{M}^3_b$ which is mapped onto $\{\mu=\infty\}$ by the blowdown map. 
Since each $F, F_{a,b,c}$ is a submanifold of $\{\mu=\infty\}$, we find 
\begin{align}
\label{beta-tau}
\beta_{(3)}^*\mu^{-1} = \tau \rho_{111} \rho_{110} \rho_{101} \rho_{011}.
\end{align} 

Let $\beta_{(2)}: \mathscr{M}^2_{rb} \to \R^+ \times M^2$ be the 
blowdown map for the reduced blowup space. Then $\beta_{(2)}^*\mu^{-1}=T \rho_{11}$, 
where $T$ is a defining function for the temporal face tf in $\mathscr{M}^2_{rb}$. Note 
that $\beta_{(2)} \circ \Pi_{a,b,c}= \pi_{a,b,c}\circ \beta_{(3)}$ and hence 
acting on functions on $\R^+ \times M^2$ we have 
\[
\Pi^*_{a,b,c} \circ \beta_{(2)}^* = \beta_{(3)}^* \circ \pi^*_{a,b,c}.
\]
Consequently, in view of \Eqref{RLC} and \Eqref{beta-tau}, we conclude
(modulo multiplication by a non-vanishing function)
\begin{equation}
\begin{split}
&\Pi_c^*(T)=\tau \rho_{110} \rho_{011}, \\
&\Pi_a^*(T)=\tau \rho_{101} \rho_{011}, \\
&\Pi_b^*(T)=\tau \rho_{101} \rho_{110}.
\end{split}
\label{TT}
\end{equation}

Using these data, we may now prove the anticipated composition formula. 
Consider the `right densities',
$K_a(x,y,x',y';\mu) dx'dy'$ and $K_b(x',y',\wx,\wy; \mu) d\wx d\wy$. 
Their product is given by 
\[
K_a(x,y,x',y';\mu) \cdot K_b(x',y',\wx,\wy; \mu)  \, dx'\, dy'\, d\wx \, d\wy
\]
Its integral over $dx'dy'$ gives $K_{c}(x,y,\wx, \wy; \mu) d\wx \, d\wy$. 
To put this into the same form required in the pushforward theorem, 
write $t= \mu^{-1}$ and multiply this expression by $dt \, dx\, dy$.
\medskip 

Blowing up a submanifold of codimension $n$ amounts in local coordinates to introducing polar coordinates, so that 
the coordinate transformation of a density leads to $(n-1)^{\mathrm{st}}$ power of the radial function, which 
is a defining function of the corresponding front face. Hence we compute the lift
\begin{equation}
\begin{split}
&\beta_{(3)}^* (dt \, dx\, dy\, dx'\, dy'\, d\wx \, d\wy) \\
&=\rho_{111}^{3+2(m-1)}\rho_{101}^{2+(m-1)}\rho_{110}^{2+(m-1)}\rho_{011}^{2+(m-1)} 
\nu^{(3)}\\ &=\rho_{111}^{3+2(m-1)}\rho_{101}^{2+(m-1)}\rho_{110}^{2+(m-1)}\rho_{011}^{2+(m-1)} 
\tau \left( \Pi \rho_{ijk} \right) \nu^{(3)}_b,
\end{split}
\label{lift1}
\end{equation}
where $\nu^{(3)}$ is a density on $\mathscr{M}^3_b$, smooth up to all boundary faces and everywhere 
nonvanishing; $\nu^{(3)}_b$ is a $b$-density, obtained from $\nu^{(3)}$ by dividing 
by a product of all defining functions on $\mathscr{M}^3_b$; 
and $\left( \Pi \rho_{ijk} \right)$ is a product over all $(ijk)\in \{0,1\}^3$. 
Set $\kappa_a =  \beta_{(2)}^* K_a$ and $\kappa_b =  \beta_{(2)}^* K_b$. 
Since $\kappa_b$ is vanishing to infinite order as $T \to 0$, its lift
$\Pi_b^*\kappa_b$ vanishes to infinite order in $\tau \rho_{110} \rho_{101}$ by \Eqref{TT}.  
\medskip

Since $\kappa_a$ is not polyhomogeneous on the reduced blowup space $\mathscr{M}^2_{rb}$,
the lift $\Pi_a^*\kappa_a$ is not polyhomogeneous in $\tau$. However, due to infinite 
order vanishing of $\Pi_b^*\kappa_b$ in $\tau$, their product 
$\Pi_a^*\kappa_a \cdot \Pi_b^*\kappa_b$ is polyhomogeneous and vanishing 
to infinite order in $\tau \rho_{110} \rho_{101} \rho_{011}$. We obtain
\[
\Pi_a^*\kappa_a \cdot \Pi_b^*\kappa_b \, \beta_{(3)}^* (dt \, dx\, dy\, dx'\, dy'\, d\wx \, d\wy) 
 =\rho_{111}^{\ell + \ell' + 1} \left( \Pi \rho_{ijk} \right) G \nu_b^{(3)},
\]
where $G$ is a bounded polyhomogeneous function on $\mathscr{M}^3_b$, 
vanishing to infinite order in $(\tau \rho_{110} \rho_{101} \rho_{011})$, 
with index sets $\N_0$ at the faces $(001)$, $(100)$ and $(010)$.
Applying the Pushforward Theorem now gives
\begin{equation}
\begin{split}
&\left(\Pi_c\right)_*\left(\Pi_a^*\kappa_a \cdot \Pi_b^*\kappa_b 
\, \beta_{(3)}^* (dt \, dx\, dy\, dx'\, dy'\, d\wx \, d\wy)  \right) \\
& = \beta_{(2)}^*\left(K_{c} \, dt\, dx\, dy\, d\wx \, d\wy\right) 
= \rho_{11}^{2+\ell+\ell'} \, G' \, \nu^{(2)}_b,
\end{split}
\label{lift3}
\end{equation}
where $\nu^{(2)}_b$ is a $b$-density on $\mathscr{M}^2_{rb}$ 
and $G'$ is a bounded polyhomogeneous function on $\mathscr{M}^2_{rb}$, 
which vanishes to infinite order in $T$, and has the index set $\N_0$ 
at the left and right boundary faces. By \cite[Proposition B7.20]{EMM:ROT} the 
pushforward is smooth across $\beta^*Y$.
\medskip

Note also that the pushforward by $\Pi_c$ does not introduce logarithmic 
terms in the front face expansion of $\kappa_{c}$, since 
the kernel on $\mathscr{M}^2_{rb}$ is vanishing to infinite order at $(101)$. 
Hence, for $\kappa_A$ and $\kappa_B$ with integer exponents in their front face expansions, 
the same holds for their composition. \medskip

By an argument similar to \Eqref{lift1}, we compute
\begin{equation}
\begin{split}
\beta_{(2)}^*(dt\, dx\, dy\, d\wx \, d\wy)=
\rho_{11}^{m+1} \left(\rho_{10}\rho_{11}\rho_{01} T\right) \nu^{(2)}_b .
\end{split}
\label{lift4}
\end{equation}
Consequently, combining \Eqref{lift3} and \Eqref{lift4}, we deduce that 
$\beta_{(2)}^* K_{c}$ vanishes to infinite order in $T$, is of leading order $(-m+ \ell + \ell')$ at 
the front face and has the index sets $\N_0$ 
at the left and right boundary faces. 
This proves the statement.
\end{proof}

\section{Multi-parameter resolvent trace expansion} \label{multi-sec}

\subsection{Initial parametrix and Neumann series}
In this section we construct the Schwartz kernel of $(A_B(\lambda)+z^q)^{-1}$
as a polyhomogeneous distribution in an open neighborhood of ff in $\mathscr{M}^2_b$.
The microlocal description of the resolvent kernels in Theorem \ref{2-3}
does not apply directly to $A_B(\lambda)$, since the smooth potentials $V_1,..,V_n$
need not be constant along $\partial M$. Hence we consider for each $y_0\in \partial M$
the operator $A_B(\lambda)_{y_0} := A_B + \sum_{k=1}^n \lambda^q_k V_k(y_0),$
with the corresponding resolvent for $z\in \Gamma$ sufficiently large
\begin{align}
K_{y_0} := (A_B(\lambda)_{y_0} + z^q)^{-1}(x,\wx, y, \wy).
\end{align}
The front face ff and its open $\varepsilon$-neighborhood in $\mathscr{M}^2_b$
are fibrations over $\partial M$ with fibres $\mathbb{S}^m_{++}$ and $[0,\varepsilon) 
\times \mathbb{S}^m_{++}$, respectively. Theorem \ref{2-3} asserts that 
$K_{y_0}(\cdot, y_0)$ lifts to a polyhomogeneous conormal distribution on 
the $y_0$-fibre with $\mu = |(\lambda, z)|^{-1}$. Consequently we may define
a distribution $K_0 \in \mathscr{A}_{\textup{phg}}^{q,q}(\mathscr{M}^2_b, \beta^*Y)$
by (e.g. in projective coordinates \eqref{right-coord})
\begin{align}
K_0(\tau, s,\wx, u, y_0) := K_{y_0}(s\wx, \wx, y_0 + \wx u, y_0).
\end{align}
By construction, $BK_0=0$ and $K_0$ solves the resolvent equation\footnote{$I$ denotes the identity operator.} $(A_B(\lambda)+z^q) K_0=I+P$
up to an error term $P$ that vanishes at ff to one order higher, more precisely is of order $(q+1)$ at ff.
As in Theorem \ref{2-3}, we may separate $K_0$ into an interior
and boundary contribution
\begin{align}
K_0 = K_i + K_b, \quad K_i \in \mathscr{A}_{\textup{phg}}^{q,q}(\mathscr{M}^2_b, \beta^*Y), 
K_b \in \mathscr{A}_{\textup{phg}}^{q,\infty}(\mathscr{M}^2_b).
\end{align}
Formally, $K_0$ is corrected to an exact solution of $(A_B(\lambda)+z^q) K=I$
by convolution with a Neumann series of the remainder $R$
\begin{equation}
\begin{split}
K &= K_0 (I+P)^{-1} = K_0 \sum_{j=0}^\infty (-1)^j P^j
\\ &= \sum_{j=0}^\infty (-1)^j \left( K_0 P^j - K_i P^j \right) + \sum_{j=0}^\infty (-1)^j 
K_i P^j \\ &=: \sum_{j=0}^\infty (-1)^j R^j_0 + \sum_{j=0}^\infty (-1)^j R^j_1 =: R_0 + R_1. 
\end{split}
\end{equation}
The Neumann series converge in the operator norm for $z\in\Gamma$ sufficiently large.
Note that the second term $R_1$ is an interior parametrix $(A(\lambda)+z^q)^{-1}$. 
Its multi-parameter expansion follows from the classical calculus of pseudo-differential operators 
with parameter, for a survey type exposition see for example \cite[Sec. 4 and 5]{Les:PDO}.
\medskip

More precisely, the differential expression $(A(\lambda)+z^q)$ 
is of order $q$, elliptic in the parametric sense with parameter 
$(\lambda, z)\in \Gamma^{n+1}$. We write $(A(\lambda)+z^q) \in \CL^{q}(M;\Gamma^{n+1})$.
By \cite[Sec. II.9]{Shu:POS}, its parametrix $R_1\in \CL^{-q}(M;\Gamma^{n+1})$, and the
$N$-th power $R_1^N \in \CL^{-qN}(M;\Gamma^{n+1})$. Consider $N\in \N$, such that 
$qN >m$. Fix any multiindex $\A \in \N_0^n$ and $\beta \in \N_0$. 
Then the Schwartz kernel of $R^N_1$, which we do not distinguish notationally from the operator, 
is continuous with an asymptotic expansion on the diagonal as $|(\lambda, z)| \to \infty$, 
$(\lambda, z) \in \Gamma^{n+1}$
\begin{align}\label{interior}
\partial^\A_\lambda \partial_z^\beta 
R^N_1(p,p;\lambda, z) \sim \sum_{j=0}^\infty f_j \left( p, \frac{(\lambda, z)}{|(\lambda, z)|} \right)
|(\lambda, z)|^{-qN - j -|\A| -\beta +m},
\end{align}
see \cite[Theorem 5.1]{Les:PDO}. The functions $f_j$
are smooth on $M \times (\Gamma^{n+1} \cap \mathbb{S}^n)$ and the expansion 
\Eqref{interior} is uniform over the compact manifold $M$. 

\subsection{Trace norm estimates and expansions} It remains to study $R_0$. 
Trace norm estimates of $R^j_0$ depend on the following basic result.

\begin{prop}\label{A-HS}
Consider $R \in \mathscr{A}_{\textup{phg}}^{\ell,\infty}(\mathscr{M}^2_b)$.
Then $R$ defines a Hilbert Schmidt operator with the Hilbert Schmidt norm
$$\|R(\cdot , \mu)\|_{HS} = O(\mu^{-\ell+m/2}), \ \mu \to \infty.$$
\end{prop}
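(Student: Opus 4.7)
The plan is to estimate the Hilbert--Schmidt norm directly via
\[
\|R(\cdot,\mu)\|_{HS}^2 = \int_{M\times M} |R(p,\widetilde p;\mu)|^2\, \textup{dvol}_M(p)\, \textup{dvol}_M(\widetilde p),
\]
using the polyhomogeneous structure of $R$ on $\mathscr{M}^2_b$. The first observation is that since $R$ has index set $\N_0$ at both $\lf$ and $\rf$ and vanishes to infinite order at $\td$ and $\tf$, it satisfies $R = O(\mu^{-\infty})$ uniformly on any compact subset of $M \times M$ bounded away from the boundary corner $\{x=\wx=0\}$; hence the interior of $M\times M$ contributes $O(\mu^{-\infty})$ to the Hilbert--Schmidt norm and can be ignored.

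The remaining contribution comes from a neighborhood of the corner, which I would cover by a partition of unity subordinate to the projective charts on $\mathscr{M}^2_b$. In the top-corner chart \eqref{top-coord} the defining function of $\ff$ is $\rho = 1/\mu$, so the leading-order condition at $\ff$ gives $R = \rho^{-m+\ell} g$ with $g$ bounded and rapidly decaying in $(\xi,\widetilde\xi,w)$ toward $\td$ and $\tf$. Combining the pointwise bound $|R|^2 \leq C\mu^{2(m-\ell)} |g|^2$ with the change of variables $dp\,d\widetilde p = \mu^{-(m+1)}\, d\xi\, d\widetilde\xi\, dw\, dy$ produces a local contribution of order $\mu^{m-2\ell-1}$ to $\|R\|_{HS}^2$. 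In the bottom-corner chart \eqref{right-coord} one has instead $R = \wx^{-m+\ell} h$ where $h$ vanishes to infinite order at $\tau = (\mu\wx)^{-1} = 0$, and this rapid decay in $\tau$ effectively restricts the $\wx$-integration to $\wx \lesssim 1/\mu$; integrating $\wx^{-m+2\ell}|h|^2$ against the pushed-forward density $\wx^m\,ds\,du\,d\wx\,dy$ again yields a bound of order $\mu^{m-2\ell-1}$, and a symmetric estimate holds in the chart near $\lf$. Summing the chart contributions gives $\|R\|_{HS}^2 = O(\mu^{m-2\ell-1})$, which implies $\|R\|_{HS} = O(\mu^{m/2-\ell})$ as claimed.

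The step I expect to require the most care is the bottom-corner estimate: the pointwise bound $|R|\lesssim \wx^{-m+\ell}$ is not $L^2$-integrable near $\wx=0$ once $\ell \leq (m-1)/2$, so one really needs to use the infinite-order vanishing of $h$ at $\tau = (\mu\wx)^{-1} = 0$ to confine the effective $\wx$-support to $\wx \gtrsim 1/\mu$, thereby making the $\wx$-integral converge uniformly in $\mu$ and producing the desired estimate. The matching of the top- and bottom-chart estimates in the overlap region $\wx \sim 1/\mu$ is what ultimately forces the exponent $m-2\ell-1$ in the pre-square-root bound.
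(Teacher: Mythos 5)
Your proof is correct and follows the same strategy as the paper's: lift $|R|^2$ together with the Riemannian density to the blowup $\mathscr{M}^2_b$, use the front-face order $-m+\ell$, the $\N_0$ index sets at $\lf$ and $\rf$, and the infinite-order vanishing at $\tf,\td$, and estimate chartwise in the projective coordinates \eqref{top-coord} and \eqref{right-coord}. The paper only writes out the chart \eqref{right-coord} and remarks that the other corners are similar, so your explicit treatment of the top-corner chart and of the interior is in the spirit of, but more complete than, the given argument. Your careful tracking of the Jacobian factors in fact produces the sharper bound $\|R\|_{HS}^2 = O(\mu^{m-2\ell-1})$, i.e.\ $\|R\|_{HS}=O(\mu^{m/2-\ell-1/2})$; the paper's estimate $O(\mu^{m/2-\ell})$ is a weaker (but of course still valid) upper bound, and your conclusion "which implies $\|R\|_{HS}=O(\mu^{m/2-\ell})$" is fine.

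One slip in your closing paragraph should be fixed: the infinite-order vanishing of $h$ at $\tau=(\mu\wx)^{-1}=0$, i.e.\ at $\tf$, cuts off the \emph{large}-$\wx$ regime (where $\mu\wx\to\infty$), so it confines the effective support to $\wx\lesssim 1/\mu$, not $\wx\gtrsim 1/\mu$ as written there. The lower cutoff $\wx\gtrsim 1/\mu$ in the bottom chart is not supplied by the vanishing at $\tf$ but by the domain of validity of the chart \eqref{right-coord} (equivalently, by requiring $\tau$ to stay bounded); the region $\wx\lesssim 1/\mu$ is then covered by the top chart \eqref{top-coord}, exactly as you handle it correctly in your middle paragraph. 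The net effective support in the bottom chart is $\wx\sim 1/\mu$, and the resulting $\mu^{m-2\ell-1}$ is right.
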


\begin{proof}
We make the argument explicit in projective coordinates \eqref{right-coord}. 
$R$ lifts to a polyhomogeneous function on $\mathscr{M}^2_{rb}$ and 
\begin{align*}
&\beta^* (R^2 dx\, dy\, d\wx \, d\wy) = \wx^{- m + 2\ell} G \, ds \, d\wx \, du \, dy \\
& = \mu^{- 2 \ell + m} \tau^{m-2\ell} G \, ds \, d\wx \, du \, dy,
\end{align*}
where $G$ is bounded in $(\wx, s, u, y)$ and vanishing to infinite order as $\tau \to \infty$.
Similar estimates hold at the other two corners of the front face in the reduced blowup space $\mathscr{M}^2_{rb}$.
Consequently we obtain as $\mu \to \infty$
\begin{align*}
\|R(\cdot , \mu)\|^2_{HS} = \int_M \int_M R(p, \widetilde{p};\mu)^2 \textup{dvol}_M(p) \textup{dvol}_M(\widetilde{p}) 
= O(\mu^{-2\ell + m}).
\end{align*}
\end{proof}

\begin{cor}\label{R-expansion}
Let $M\in \N$ and $\gb\in\N_0$ be fixed. 
Write for any multiindex $\A \in \N_0^n$ 
$\partial^\A_\lambda = \partial^{\A_1}_{\lambda_1} \cdots \partial^{\A_n}_{\lambda_n}$.
Then for $\mu_0$ sufficiently large there exist constants $C>0$ and $0<q<1$ such that
for $N\ge M$ and $\mu\ge \mu_0$ the trace norms satisfy the following estimates
\begin{equation}
\begin{split}
&\bigl\| \, \pl^\A_\lambda \pl_z^\beta  R^{N}_0  \, \bigr\|_\tr 
\leq C \cdot N\cdot q^{N-M} \cdot \mu^{-M-|\ga|-\gb - q + \frac{m}{2}},
\\  &\Bigl\| \, \pl^\ga_\gl \pl_z^\gb \sum_{j=M}^\infty R^{j}_0 \, \Bigr\|_\tr =
 O\bl \mu^{-M-|\ga|-\gb - q + \frac{m}{2}}\br, \text{ as } \mu\to\infty.
\end{split}
\end{equation}
\end{cor}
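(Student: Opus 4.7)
The plan is to combine the telescoping structure of the Neumann iterates with the composition calculus of Section~\ref{comp-sec} and the Hilbert--Schmidt estimate of Proposition~\ref{A-HS}. First, using $K_{A_B}=K_A+K$ with $K\in\mathscr{A}^{q,\infty}_{\textup{phg}}(\mathscr{M}^2_b)$ from Theorem~\ref{2-3}, the identity $U^{N+1}-V^{N+1}=\sum_{j=0}^{N}U^j(U-V)V^{N-j}$, applied with $U\sim K_{A_B}\lambda(V,W)$ and $V\sim K_A\lambda(V,W)$, rewrites
\[
R^N_0=(K_{A_B}\lambda(V,W))^N K+\sum_{j=0}^{N-1}(K_{A_B}\lambda(V,W))^j\,(K\lambda(V,W))\,(K_A\lambda(V,W))^{N-1-j}K_A,
\]
so that each of the $N+1$ summands is a composition of $(N+1)$ resolvent factors in which at least one factor is $K\in\mathscr{A}^{q,\infty}_{\textup{phg}}$, while smooth multiplication by $\lambda(V,W)$ preserves all classes. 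Iterating the composition proposition then places each summand in $\mathscr{A}^{q(N+1),\infty}_{\textup{phg}}(\mathscr{M}^2_b)$.

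Next, I push the derivatives $\pl^\A_\lambda \pl^\beta_z$ through by Leibniz. Since $\mu=(z^q+\sum_k\lambda_k^q V_k(0))^{1/q}$ with $V_k(0)>0$, the chain rule turns $\pl_z$ and $\pl_{\lambda_k}$ acting on the kernels into $\pl_\mu$ multiplied by uniformly bounded factors $z^{q-1}/\mu^{q-1}$ and $\lambda_k^{q-1}V_k(0)/\mu^{q-1}$ (bounded since $|z|,|\lambda_k|\lesssim\mu$). A direct inspection in the projective coordinates~\eqref{top-coord}--\eqref{d-coord} shows that $\pl_\mu$ shifts the leading index at $\ff$ by one while preserving rapid vanishing at $\td$ and $\tf$; equivalently, each derivative maps $\mathscr{A}^{\ell,\infty}_{\textup{phg}}$ into $\mathscr{A}^{\ell+1,\infty}_{\textup{phg}}$. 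Derivatives landing on the potentials produce bounded smooth multipliers. Hence $\pl^\A_\lambda \pl^\beta_z R^N_0$ is a sum of $O_{N,|\A|,\beta}(1)$ products, each in $\mathscr{A}^{q(N+1)+|\A|+\beta,\infty}_{\textup{phg}}(\mathscr{M}^2_b)$.

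To pass to the trace norm, I would factor each such product as $P\circ Q$ with both $P$ and $Q$ in an $\mathscr{A}^{\cdot,\infty}_{\textup{phg}}$ class, so that Proposition~\ref{A-HS} yields Hilbert--Schmidt bounds on each half and $\|P\circ Q\|_\tr\leq\|P\|_{HS}\|Q\|_{HS}$ produces a per-term trace-norm bound of order $\mu^{-q(N+1)+m-|\A|-\beta}$. For a summand carrying only a single $K$-factor, I apply $K_{A_B}=K_A+K$ once more on a resolvent sitting on the $K$-free side, producing the extra $K$ needed for the factorisation. Finally, writing $\mu^{-q(N+1)}=\mu^{-qM-q}\cdot\mu^{-q(N-M)}$ and choosing $\mu_0$ large enough that $q_1:=\mu_0^{-q}\in(0,1)$, one has $\mu^{-q(N-M)}\leq q_1^{N-M}$ for $\mu\geq\mu_0$; since $q\geq 1$ and $\mu\geq 1$, also $\mu^{-qM}\leq\mu^{-M}$. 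The first displayed inequality then follows, with the $N$ prefactor supplied by the $N+1$ telescoped terms; the second is immediate from $\sum_{N\geq M}N q_1^{N-M}<\infty$.

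The hard part will be the factorisation in the third step: the basic telescoping alone yields summands with only one $K$-factor, whereas bounding the trace norm by a product of two Hilbert--Schmidt norms demands at least one $K$ on each side. The secondary expansion $K_{A_B}=K_A+K$ on the $K$-free half resolves this, at the cost of more combinatorial terms which are absorbed into the constant $C$ together with the explicit $N$. A subordinate technical check is that $\pl_\mu$ acts as the claimed single-index shift on $\mathscr{A}^{\ell,\infty}_{\textup{phg}}$ in each projective chart \eqref{top-coord}--\eqref{d-coord} of $\mathscr{M}^2_b$.
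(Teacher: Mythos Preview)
Your telescoping decomposition and the use of the composition calculus are essentially the same as in the paper. The real gap is in how you obtain the geometric factor $q^{N-M}$.

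You place each summand of $R_0^N$ in $\mathscr{A}^{q(N+1)+|\A|+\beta,\infty}_{\textup{phg}}$, apply Proposition~\ref{A-HS} to each Hilbert--Schmidt half, and then extract $q_1^{N-M}$ with $q_1=\mu_0^{-q}$ from the resulting power $\mu^{-q(N+1)+m}$. But Proposition~\ref{A-HS} gives $\|R\|_{HS}=O(\mu^{-\ell+m/2})$ for a \emph{fixed} kernel $R$; the implied constant depends on the polyhomogeneous seminorms of $R$. When $R$ is an $(N+1)$-fold composition, those seminorms can grow with $N$, and nothing in your argument bounds that growth. So your per-term estimate is really $C_N\,\mu^{-q(N+1)+m}$ with $C_N$ uncontrolled, and the inequality $\mu^{-q(N-M)}\le q_1^{N-M}$ does not help: $C_N$ may well beat $q_1^{-(N-M)}$. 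The required uniformity in $N$ is exactly what needs to be proved, not assumed.

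The paper avoids this by a different mechanism: it bounds $N-M$ of the factors $K_j\lambda(V,W)$ in \emph{operator norm} by a constant $q<1$ coming from the classical resolvent bound $\|(A_B+\mu^q)^{-1}\|\le \mu^{-q}$ together with the localisation $\|V_k-V_k(0)\|_\infty\le V_k(0)/2$ (so the quotient $(\sum\lambda_k^q\|V_k-V_k(0)\|_\infty+\|W\|_\infty)/\mu^q$ is $<1$ for $\mu$ large). This $q$ is independent of $N$ and of $\mu\ge\mu_0$. Only the remaining $M+1$ factors are fed into the composition calculus and Proposition~\ref{A-HS}, and since $M$ is fixed the constant there is uniform. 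Your proposed factorisation $P\circ Q$ with two $K$'s is also problematic (for the summand with $j=0$ the entire product is $K\lambda(V,W)\,(K_A\lambda(V,W))^{N-1}K_A$, with no $K_{A_B}$ left to expand on the $K$-free side), but this is secondary: even if the factorisation could be arranged, the uniformity-in-$N$ issue would remain.
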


\begin{proof} It suffices to consider the case $\ga=0^n $ and $\gb=0$. 
For elements $a$ and $b$ in a not 
necessarily commutative ring we have by induction the following identity
\begin{align}
(a+b)^N - b^N = \sum_{k=0}^{N-1} b^k a (a+b)^{N-k-1}.
\end{align}
By construction, $P = \lambda(V) K_0$, where in local coordinates
$(x,y)\in \mathscr{U}$ in an open collar neighborhood of the boundary $\lambda(V)
= \sum_{k=1}^n \lambda_k^q (V_k(x,y) - V_k(0,y))$. Consequently, we may write 
\begin{equation}
\begin{split}
R^N_0 &= K_{0} \circ \left( \lambda(V) K_{0} \right)^N 
- K_{i} \circ \left( \lambda(V) K_{i} \right)^N \\
&= \sum_{k=0}^{N-1} K_i \circ (\lambda (V) K_i )^k \circ K_b \, \lambda (V) \circ (\lambda(V) K_{0})^{N-k-1}
\\ &+ K_b \circ \left( \lambda(V) K_{0} \right)^N.
\end{split}
\end{equation}
Each of the $N$ summands of $R^{N}_0$ is of the form 
\begin{equation}\label{P-N}
P_N= Q_0 \circ \left(\prod\limits_{j=1}^N   \gl(V) Q_j\right),
\end{equation}
where each $Q_j, j=0,\ldots,N$, is either $K_{0}, K_i$ or $K_b$.
By construction, at least one of the kernels equals $K_b$. \medskip

By the classical resolvent decay, we have for $z\in \Gamma$ sufficiently large and 
for each $j$ an estimate of the operator norm 
\begin{align}
     \| \gl(V) Q_j \| \leq c <1.
\end{align}
Thus for any fixed $M\leq N$ we may estimate the trace norm of each summand $P_N$ by
\begin{equation}\label{EqTraNorEst}
  \bigl\| P_N \bigr\|_\tr \le 
  c^{N-M}\cdot \bigl\|  Q_0 \left(\prod_{j=1}^M \gl(V) Q_j \right) \bigr\|_\HS,
\end{equation}
where $\|\cdot\|_\tr, \|\cdot\|_\HS$ denote the trace norm resp.
the Hilbert-Schmidt norm. Note that for $\delta V_k(x,y):= V_k(x,y) - V_k(0,y)$ the lift 
$\beta^*\delta V_k$ vanishes to first order at ff and hence writing for any $\A \in \N_0^n$
$(\lambda^q)^\A:= \lambda_1^{q\A_1} \cdots \lambda_n^{q\A_n}$ and 
$|\A|:=\A_1+ \cdots + \A_n$ we find 
\begin{equation}\label{4-8}
Q_0 \left(\prod_{j=1}^M \gl(V) Q_j \right)  \in \bigoplus_{|\A|=M} 
(\lambda^q)^\A \mathscr{A}_{\textup{phg}}^{M(q+1)+q,\infty}(\mathscr{M}^2_b).
\end{equation}
By Proposition \ref{A-HS} we find
\begin{equation}
\bigl\| Q_0 \left(\prod_{j=1}^M \gl(V) Q_j \right) \bigr\|_\HS
\leq C_M \, \mu^{-M-q+m/2}.
\end{equation}
This proves the first and subsequently the second statements.
\end{proof}

We may now separate 
$$
R_0 = \sum_{j=0}^\infty (-1)^j R^j_0 = \sum_{j=0}^{M-1} (-1)^j R^j_0 + \sum_{j=M}^\infty (-1)^j R^j_0,
$$
and in view of Corollary \ref{R-expansion}, an asymptotic expansion of $\textup{Tr} R_0$
follows by studying the first sum and then taking $M\to \infty$ to improve the remainder estimate.
Asymptotics of the first sum follows from the next proposition.

\begin{prop}
\label{trace-boundary}
For any $R \in 
\mathscr{A}_{\textup{phg}}^{\ell,\infty}(\mathscr{M}^2_b)$
we find 
\begin{align}
\Tr R(\cdot, \mu)  \sim \sum_{j=0}^\infty 
a_j \mu^{-1-j-\ell + m}, \ \mu \to \infty.
\end{align} 
\end{prop}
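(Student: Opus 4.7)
The trace is the diagonal integral $\Tr R(\cdot,\mu) = \int_M R(p,p;\mu)\,\textup{dvol}_M(p)$, and my plan is to extract its asymptotic expansion by localizing the integrand with respect to the face structure of $\mathscr{M}^2_b$. Split $M = M_{\textup{int}} \cup M_\partial$ where $M_\partial$ is a collar neighborhood of the boundary and $M_{\textup{int}}$ is its complement, with an associated partition of unity. On $M_{\textup{int}}$, the lifted diagonal meets the boundary of $\mathscr{M}^2_b$ only at the temporal diagonal $\td$ as $\mu \to \infty$, so the infinite-order vanishing of $R$ at $\td$ forces the interior contribution to be $O(\mu^{-N})$ for every $N \in \N$. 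This reduces matters to the boundary contribution.

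Near the boundary I use the projective coordinates on $\mathscr{M}^2_b$. In the top-corner chart \eqref{top-coord} the diagonal is cut out by $\{\xi = \widetilde\xi,\ w = 0\}$, and the polyhomogeneous expansion
\[
R \sim \rho^{-m+\ell} \sum_{j \geq 0} \rho^j\, b_j(\xi, \widetilde\xi, w, y), \qquad \rho \to 0,
\]
restricts on the diagonal to $\mu^{m-\ell}\sum_j \mu^{-j} b_j(\xi,\xi,0,y)$. Since $dx\,dy = \rho\,d\xi\,dy$ on the diagonal, the integral produces
\[
\int R|_{\textup{diag}}\,dx\,dy \sim \sum_j \mu^{-1-j-\ell+m} \int b_j(\xi,\xi,0,y)\,d\xi\,dy.
\]
The chart \eqref{right-coord}, together with its $x \leftrightarrow \wx$ analogue, covers the two bottom corners of $\ff$, and a parallel calculation with $\tau=1/(\mu\wx)$ yields the same series. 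Convergence of each coefficient integral is guaranteed by the index set $\N_0$ at $\rf$ and $\lf$ (smoothness at $\xi = 0$ or $\widetilde\xi = 0$) and by the infinite-order vanishing at $\tf$ (rapid decay as $\tau \to 0$ or $|(\xi,\widetilde\xi,w)|\to\infty$).

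The main subtlety, and the reason this is not merely a change-of-variables exercise, is ensuring that the expansions obtained in the different projective charts assemble into a single asymptotic series with coefficients depending consistently on the global polyhomogeneous data at $\ff$. This is most conveniently handled by invoking the Pushforward Theorem \ref{push} applied to the projection $\pi : \beta^* Y \to \R^+_\mu$ from the lifted diagonal: $\pi$ is a $b$-fibration with $\pi^*(\mu^{-1}) = \rho_\ff\,\rho_\td\,\rho_\tf$ on $\beta^*Y$, and the index set $\N_0$ at $\ff$ together with infinite-order vanishing at $\td$ and $\tf$ forces the output index set at $\{\mu=\infty\}$ to be $\{-1-\ell+m+j : j\in\N_0\}$, with no logarithms since the extended union in the pushforward formula degenerates to an ordinary sum. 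This matches the claimed expansion $\sum_j a_j\,\mu^{-1-j-\ell+m}$.
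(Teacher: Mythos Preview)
Your proposal is correct and follows essentially the same route as the paper: both restrict $R$ to the lifted diagonal $\beta^*Y$ (the blowup of $\R^+_{1/\mu}\times M$ at its highest-codimension corner) and apply the Pushforward Theorem to the projection $\beta^*Y \to \R^+$, with your preliminary coordinate computations serving as helpful but logically optional motivation. One minor correction: $\beta^*Y$ has no separate $\tf$ face---the diagonal meets only $\ff$, $\lf$ ($=\rf$), and $\td$---so the correct relation is $\pi^*(\mu^{-1}) = \rho_\ff\,\rho_\td$ there; this does not affect your conclusion since the needed infinite-order vanishing is already supplied at $\td$.
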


\begin{proof}
Consider $Y:=\{(\mu, p, \widetilde{p}) \in \R^+\times M^2 \mid p=\widetilde{p}\}$.
The lift $\beta^* R$ does not have a conormal singularity and restricts to a polyhomogeneous distribution on
$\beta^*Y \subset \mathscr{M}^2_b$, which itself is a blowup of 
$\R^+_{1/\mu} \times M$ at the highest codimension corner, with the blowdown map denoted 
by $\beta_{Y}$. We refer to the restrictions of ff, lf and td in 
$\mathscr{M}^2_b$ to $\beta^*Y$ as the front face, left face and 
temporal diagonal again. \medskip

The restriction of $\beta^*R$ to $\beta^*Y$ is polyhomogeneous 
of leading order $(-m+\ell)$ at the front face, index set $\N_0$ 
at the left face and vanishes to infinite order at the temporal diagonal.
Consider the obvious projection $\pi: \R^+ \times M \to \R^+$. Then
($t=1/\mu$)
\begin{align*}
\Tr R(\cdot, \mu) \, dt = (\pi \circ \beta_Y)_* \left(\beta_Y^* R|_Y
dt \, \textup{dvol}_M\right).
\end{align*}
Note that 
\[
\beta_Y^* \left(R|_Y
dt \, \textup{dvol}_M\right) = \rho_\ff^{-m+\ell+2} G \nu_b.
\]
where $\nu_b$ is a $b$-density on $\beta^*Y$, $\rho_\ff$ is the 
defining function of the front face, $G$ is a bounded polyhomogeneous 
distribution with index set $\N_0$ at the left face and vanishes to infinite order at the temporal diagonal 
in $\beta^*Y$. By the Theorem \ref{push} we find as $t\to 0$
\begin{align*}
 (\pi \circ \beta_Y)_* \left( \beta_Y^* R|_Y
dt \, \textup{dvol}_M\right) \sim \sum_{j=0}^\infty t^{-m+\ell +2 +j} \left(t^{-1}dt\right).
\end{align*}
This proves the statement.
\end{proof}

\begin{cor} \label{R-0}
Consider any multiindex $\A \in \N_0^n$ and $\beta \in \N_0$.
For each $i \in \N_0$ there exist  $h_i \in C^\infty (\Gamma^{n+1} \backslash \{0\})$, homogeneous 
in $(\lambda, z)$ of order $(-1 -q+m -i)$, such that in the notation of Corollary \ref{R-expansion}
$$
\partial^\A_\lambda \partial_z^\beta  \Tr R_0 \sim 
\sum_{i=0}^\infty h_{i+|\A|+\beta}(\lambda, z),
\ \textup{as} \ \mu \to \infty.
$$
\end{cor}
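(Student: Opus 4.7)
The strategy is to truncate $R_0 = \sum_j (-1)^j R^j_0$ to a finite partial sum using Corollary~\ref{R-expansion}, decompose each $R^j_0$ via the telescoping identity and the composition result~\eqref{composition}, apply Proposition~\ref{trace-boundary} to extract a trace expansion, and finally regroup by homogeneity degree in $(\lambda, z)$.

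For any $M \in \N$ split
\[
R_0 = \sum_{j=0}^{M-1} (-1)^j R^j_0 + \sum_{j=M}^\infty (-1)^j R^j_0.
\]
By Corollary~\ref{R-expansion}, the trace norm of the tail after any derivative $\partial_\lambda^\A \partial_z^\beta$ is $O(\mu^{-M - |\A| - \beta - q + m/2})$. Since $\mu^q = \sum_k \lambda_k^q V_k(0) + z^q$ is positively homogeneous of degree $q$ in $(\lambda, z) \in \Gamma^{n+1}$ and equivalent to $|(\lambda,z)|^q$, this tail is $o(|(\lambda, z)|^{-N})$ for arbitrary $N$ once $M$ is large. It therefore suffices to establish the asymptotic expansion for each fixed $R^j_0$ with $j < M$.

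For fixed $j$, the telescoping identity from the proof of Corollary~\ref{R-expansion} presents $R^j_0$ as a finite sum of operator compositions $\prod_{i=1}^j K_i \, \lambda(V,W) \cdot K_0$, each containing at least one factor equal to $K = K_{A_B} - K_A \in \mathscr{A}_{\textup{phg}}^{q, \infty}(\mathscr{M}^2_b)$. Expanding $\lambda(V,W) = \sum_k \lambda_k^q (V_k - V_k(0)) + W$ and iterating~\eqref{composition}, each such composition lies in a finite sum of spaces $\lambda^\alpha \mathscr{A}_{\textup{phg}}^{q(j+1) + p, \infty}(\mathscr{M}^2_b)$, indexed by multi-indices $\alpha$ with $|\alpha| = qp$ and $0 \leq p \leq j$. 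Here $p$ counts the selected $(V_k - V_k(0))$ factors; the $p$-boost at the front face reflects their vanishing on $\partial M$ (as $V_k$ is constant there), while the presence of at least one $K$-factor propagates infinite-order vanishing at the temporal diagonal through each composition step. Applying Proposition~\ref{trace-boundary} to a summand $T \in \mathscr{A}_{\textup{phg}}^{q(j+1)+p, \infty}(\mathscr{M}^2_b)$ then gives
\[
\Tr T(\cdot, \mu) \sim \sum_{i=0}^\infty a_i \, \mu^{-1 - i - q(j+1) - p + m},
\]
so the corresponding contribution to $\Tr R^j_0$ is a sum of terms $\lambda^\alpha \mu^{-1 - i - q(j+1) - p + m}$, each smooth on $\Gamma^{n+1} \setminus \{0\}$ and positively homogeneous in $(\lambda, z)$ of degree $-1 - q + m - N$ with $N := q(j-p) + p + i$. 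Since $0 \leq p \leq j$ and $q \geq 1$, $N$ ranges over the nonnegative integers and only finitely many triples $(j, p, i)$ contribute to each fixed $N$. Grouping contributions by $N$ and restricting to $\Gamma^{n+1} \cap \mathbb{S}^n$ yields the claimed coefficients $h_N$; absence of logarithmic factors follows from the integer-exponent preservation noted at the end of the proof of~\eqref{composition}.

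The expansion for $\partial_\lambda^\A \partial_z^\beta \Tr R_0$ follows by termwise differentiation of the above series, justified by the derivative tail bound in Corollary~\ref{R-expansion}: each such derivative lowers homogeneity by $1$, producing precisely $h_{N + |\A| + \beta}$ after reindexing. The main obstacle is the homogeneity bookkeeping — tracking how the $p$-boost at the front face, the $q|\alpha|$-boost from the $\lambda$-monomial, and the $-q(j+1)$ drop from iterated composition combine into integer-shifted degrees, and verifying that only finitely many summands contribute at each fixed $N$; both follow from the inequality $p \leq j$ and the integrality of $q$.
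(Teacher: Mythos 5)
Your proof is correct and follows essentially the same route as the paper: split $R_0$ into a finite partial sum plus a tail controlled by Corollary~\ref{R-expansion}, apply the composition result and Proposition~\ref{trace-boundary} to each $R^j_0$, and regroup by homogeneity degree. Your bookkeeping is in fact slightly more careful than the paper's in two respects: you keep track of genuine multi-indices $\alpha$ with $|\alpha|=qp$ coming from mixed products of the $\lambda_k^q(V_k-V_k(0))$ factors (the paper writes simply $\lambda_k^{qp}$ for a single $k$, which is shorthand for the same thing), and you make explicit the reindexing $N=q(j-p)+p+i$ together with the finiteness of the fibers over fixed $N$ and the role of $q\in\N$, $p\le j$ there, which the paper leaves implicit in its chain of $\sim$'s. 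One small caveat: phrasing the derivative step as ``termwise differentiation of the asymptotic series'' is a little loose, since asymptotic expansions do not automatically differentiate termwise; what actually justifies it (and what the paper does via the resolvent identities in the proof of Corollary~\ref{R-expansion}) is that $\partial^\A_\lambda\partial_z^\beta R^j_0$ is itself a finite sum of kernels in shifted polyhomogeneous spaces, to which Proposition~\ref{trace-boundary} applies directly, with the tail again controlled by the derivative trace-norm estimate. Since you invoke exactly that estimate and the finite decomposition, the underlying argument is sound.
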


\begin{proof}
As before in Corollary \ref{R-expansion} it suffices to consider $\A=0^n, \beta =0$.
As observed in \eqref{4-8}, we find by construction
$$
R_0^j = \sum_{|\A|=j}^n Q_{j,\A} 
\in \bigoplus_{|\A|=M} (\lambda^q)^\A 
\mathscr{A}_{\textup{phg}}^{j(q+1)+q,\infty}(\mathscr{M}^2_b).
$$
For each $Q_{j,\A} \in  (\lambda^q)^\A 
\mathscr{A}_{\textup{phg}}^{j(q+1)+q,\infty}(\mathscr{M}^2_b)$ we find by Proposition \ref{trace-boundary} 
as $\mu \to \infty$
\begin{align*}
\Tr Q_{j,\A}(\mu) \sim \sum_{i=0}^\infty b_i \left(\frac{(\lambda^q)^\A}{\mu^{q|\A|}}\right) \mu^{-1 - i - j - q+m}.
\end{align*}
Altogether we obtain the following polyhomogeneous multi-parameter trace expansion for any $M\in \N$
$$
\Tr \sum_{j=0}^{M-1} (-1)^j R^j_0 \sim \sum_{i=0}^\infty h'_{i}(\lambda, z),
\ \textup{as} \ \mu \to \infty,
$$
where each $h'_{i}(\lambda, z)$ is homogeneous in $(\lambda, z)$ of order $(-1-q+m-i)$.
Taking $M\to \infty$ we derive by Corollary \ref{R-expansion} the stated full polyhomogeneous expansion.
\end{proof}

Together with the asymptotics of the interior contribution \eqref{interior} this leads to our final main result.

\begin{theorem}
Consider any multiindex $\A \in \N_0^n$ and $\beta \in \N_0$. Fix $N\in \N$ such that 
$qN >m$. Then there exist  $e_i \in C^\infty (M \times (\Gamma^{n+1} \cap \mathbb{S}^n))$,
such that
$$\partial^\A_\lambda \partial_z^\beta \Tr (A_B(\lambda) + z^q)^{-N}
\sim \sum_{j=0}^\infty e_j \left(\frac{(\lambda, z)}{|(\lambda, z)|} \right)
|(\lambda, z)|^{-1-qN - j -|\A| -\beta +m},$$
where $e_0$ is purely a boundary contribution.
\end{theorem}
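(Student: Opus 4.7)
I would prove this by combining the two kinds of analysis already developed in the paper: the polyhomogeneous calculus on the blowup $\mathscr{M}^2_b$ for the boundary contribution and the parametric pseudo-differential calculus for the interior contribution. Starting from the Neumann decomposition $(A_B(\lambda)+z^q)^{-1}=R_0+R_1$ introduced in \eqref{R-1-2}, I would raise both sides to the $N$-th power and use the non-commutative multinomial expansion
\begin{equation*}
(A_B(\lambda)+z^q)^{-N} \;=\; R_1^N \;+\; \mathcal{R},
\end{equation*}
where $\mathcal{R}$ collects the finitely many mixed products $R_{\epsilon_1}\cdots R_{\epsilon_N}$ with $\epsilon_i\in\{0,1\}$ and at least one $\epsilon_i=0$. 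The trace splits accordingly and I treat the two summands separately.

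For the interior piece $\Tr R_1^N$ the statement follows directly from the on-diagonal expansion \eqref{interior} by integration over the compact manifold $M$, which yields a polyhomogeneous expansion with angular coefficients smooth on $\Gamma^{n+1}\cap\mathbb{S}^n$. For the boundary piece $\Tr\mathcal{R}$ each summand is a composition of the Schwartz kernels $K_{A_B},K_A\in\mathscr{A}_{\textup{phg}}^{q,q}(\mathscr{M}^2_b,\beta^*Y)$ with at least one factor $K=K_{A_B}-K_A\in\mathscr{A}_{\textup{phg}}^{q,\infty}(\mathscr{M}^2_b)$ provided by Theorem \ref{2-3}, interleaved with the multiplication operator $\lambda(V,W)$. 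Applying the composition proposition of \S\ref{comp-sec} iteratively, and using that the presence of at least one factor $K$ forces infinite-order vanishing at $\td$, each such summand lands in $\mathscr{A}_{\textup{phg}}^{\ell,\infty}(\mathscr{M}^2_b)$ with $\ell\geq qN$. Proposition \ref{trace-boundary} then furnishes the polyhomogeneous trace expansion, and the bookkeeping of the $\lambda_k^q$ factors arising from each $\lambda(V,W)$ proceeds exactly as in Corollary \ref{R-expansion} and Corollary \ref{R-0}, replacing homogeneity in $\mu$ by joint homogeneity in $(\lambda,z)$ and producing angular profiles on $\Gamma^{n+1}\cap\mathbb{S}^n$.

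Adding the two expansions and collecting coefficients of equal joint homogeneity yields the claimed polyhomogeneous expansion of $\partial_\lambda^\A\partial_z^\beta\Tr(A_B(\lambda)+z^q)^{-N}$. The refined assertion that $e_0$ is purely a boundary contribution is read off by identifying the leading order in the stated expansion as one at which only the boundary term $\Tr\mathcal{R}$ contributes, using the extra factor $|(\lambda,z)|^{-1}$ produced by the pushforward of the $b$-density in Proposition \ref{trace-boundary} that is absent from the interior expansion \eqref{interior}. The principal obstacle is the combinatorial accounting of the non-commutative $N$-fold Neumann expansion together with the tracking of polyhomogeneous orders under iterated compositions of kernels of mixed type; all the necessary analytic input is already available through Theorem \ref{2-3}, the composition proposition of \S\ref{comp-sec}, and Propositions \ref{A-HS} and \ref{trace-boundary}.
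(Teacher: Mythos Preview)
Your route differs from the paper's in one essential point: you pass to the $N$-th power by the non-commutative multinomial expansion $(R_0+R_1)^N=R_1^N+\mathcal{R}$, whereas the paper never expands the product at all. Instead it uses the elementary identity
\[
(A_B(\lambda)+z^q)^{-N}\;=\;\Bigl(-\tfrac{1}{q}\,z^{1-q}\partial_z\Bigr)^{N-1}(A_B(\lambda)+z^q)^{-1}
\;=\;\Bigl(-\tfrac{1}{q}\,z^{1-q}\partial_z\Bigr)^{N-1}R_0\;+\;R_1^N,
\]
thereby reducing the boundary contribution for general $N$ to the case $N=1$, for which Corollary~\ref{R-0} already supplies the full polyhomogeneous expansion of $\Tr R_0$ (including the Neumann remainder control from Corollary~\ref{R-expansion}). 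Differentiation in $z$ simply shifts homogeneity degrees, so nothing further is needed on the boundary side; the interior side is handled by \eqref{interior} exactly as you do.

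Your approach is in principle viable but, as written, has two genuine gaps. First, the composition result of \S\ref{comp-sec} is only established in the order $\mathscr{A}_{\textup{phg}}^{\ell,k}(\mathscr{M}^2_b,\beta^*Y)\circ\mathscr{A}_{\textup{phg}}^{\ell',\infty}(\mathscr{M}^2_b)$, with the factor vanishing to infinite order at $\td$ on the \emph{right}; a mixed word in $\mathcal{R}$ such as $R_0\,R_1^{N-1}$ requires the reverse order as well, which you would need to prove (or argue by adjoints). Second, each $R_0$ and $R_1$ is itself an infinite Neumann series in $\mu$, so a single ``summand'' of $\mathcal{R}$ is already a multi-series of kernel compositions; asserting that it lies in some $\mathscr{A}_{\textup{phg}}^{\ell,\infty}(\mathscr{M}^2_b)$ with $\ell\ge qN$ requires a truncation-plus-remainder argument analogous to Corollary~\ref{R-expansion} but now for mixed words rather than for $R_0^j$ alone. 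Neither of these is insurmountable, but they are not covered by simply invoking Corollaries~\ref{R-expansion} and~\ref{R-0}. The paper's differentiation trick bypasses both issues entirely, which is what makes its proof a two-line affair.
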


\begin{proof}
The statement follows from Corollary \ref{R-0}, \Eqref{interior} and
\begin{equation}
\begin{split}
\partial^\A_\lambda \partial_z^\beta (A_B(\lambda) + z^q)^{-N} = 
\partial^\A_\lambda \partial_z^\beta \left(-\frac{1}{q} z^{1-q} \partial_z\right)^{N-1}
(A_B(\lambda) + z^q)^{-1} \\ = \partial^\A_\lambda \partial_z^\beta \left(-\frac{1}{q} z^{1-q} \partial_z\right)^{N-1}
(R_0 + R_1).
\end{split}
\end{equation}
\end{proof}

\section*{Acknowledgements}
The interest in multi-parameter resolvent trace expansions 
arose in a joint project with Matthias Lesch, whom the author thanks for various central discussions.
The author also gratefully acknowledges helpful discussions with Benedikt Sauer, 
Leonid Friedlander and Rafe Mazzeo. The author is also indebted to the anonymous
referee for the very detailed reading of the manuscript. 
The author was supported by the Hausdorff Center for Mathematics.

\def\cprime{$'$}
\providecommand{\bysame}{\leavevmode\hbox to3em{\hrulefill}\thinspace}
\providecommand{\MR}{\relax\ifhmode\unskip\space\fi MR }
\providecommand{\MRhref}[2]{%
  \href{http://www.ams.org/mathscinet-getitem?mr=#1}{#2}
}
\providecommand{\href}[2]{#2}


\begin{thebibliography}{10}

\bibitem[\textsc{EMM91}]{EMM:ROT}
\textsc{C.~L. Epstein}, \textsc{R.~B. Melrose}, and \textsc{G.~A. Mendoza},
  \emph{Resolvent of the {L}aplacian on strictly pseudoconvex domains}, Acta
  Math. \textbf{167} (1991), no.~1-2, 1--106. \MR{1111745 (92i:32016)}


\bibitem[\textsc{LeVe13}]{LesVer:RSD}
\textsc{M. Lesch and B. Vertman} \emph{Regularized sum of zeta-determinants},
arXiv:1306.0780 [math.SP] (2013), accepted, to appear in Math. Annalen (2014)

\bibitem[\textsc{Les10}]{Les:PDO}
\textsc{M. Lesch}, \emph{Pseudodifferential operators and regularized traces}, Motives,
  quantum field theory, and pseudodifferential operators, Clay Math. Proc.,
  vol.~12, Amer. Math. Soc., Providence, RI, 2010, pp.~37--72.
\texttt{arXiv:0901.1689 [math.OA]}


\bibitem[\textsc{Mel92}]{Mel:COC}
\textsc{R. Melrose} \emph{Calculus of conormal distributions on manifolds with corners} Intl. Math. Research Notices, No. 3  (1992), 51-61.

\bibitem[\textsc{Mel93}]{Mel:TAP}
\textsc{R. Melrose} \emph{The Atiyah-Patodi-Singer index theorem} Research Notes in Math., Vol. 4, A K Peters, Massachusetts (1993)

\bibitem[\textsc{Moo99}]{Moo:HKA}
\textsc{E.~A. Mooers}, \emph{Heat kernel asymptotics on manifolds with conic
  singularities}, J. Anal. Math. \textbf{78} (1999), 1--36. \MR{1714065
  (2000g:58039)}

\bibitem[\textsc{See69}]{See:TRO}
\textsc{R. Seeley} \emph{The resolvent of an elliptic boundary problem}, Amer. J. Math., No. 4  (1969), 889-920.


\bibitem[\textsc{Shu01}]{Shu:POS}
\textsc{M.~A. Shubin}, \emph{Pseudodifferential operators and spectral theory},
  second ed., Springer-Verlag, Berlin, 2001, Translated from the 1978 Russian
  original by Stig I. Andersson. \MR{1852334 (2002d:47073)}
  
\bibitem[\textsc{Ve15}]{Ver}
\textsc{B. Vertman} \emph{Regularized limit of determinants on discrete tori},
preprint arXiv:1502.04541 [math.SP], (2015)


\end{thebibliography}
\end{document}